\documentclass{article}
\usepackage{amsmath}
\usepackage{amssymb}
\usepackage{latexsym}
\usepackage{amsthm}
\usepackage{yhmath}
\usepackage{pgfplots}
\usepackage{tikz}
\usepackage{cite}
\textheight 6.5in
\textwidth 5in

{\LARGE }

\begin{document}

\baselineskip=16pt

\title{Long-time asymptotic behavior for  the discrete defocusing   mKdV equation}
\author{Meisen Chen,   Engui Fan\thanks{Corresponding author: faneg@fudan.edu.cn}\\[4pt]
 School of Mathematical Sciences, Fudan University,\\
  Shanghai 200433, P.R. China}
\maketitle

\begin{abstract}
In this article, we apply Deift-Zhou nonlin{\footnotesize {\tiny }}ear steepest descent method  to  analyze   the long-time asymptotic behavior of the solution  for
the  discrete defocusing mKdV equation
\begin{equation}
\dot q_n = \left(1-q_n^2\right)\left(q_{n+1}-q_{n-1}\right)  \nonumber
\end{equation}
with   decay initial value
\begin{equation}\nonumber
q_n(t=0) = q_n(0),
\end{equation}
where $n=0,\pm1,\pm2,\cdots$  is a  discrete variable and $t$ is continuous time variable.  This    equation  was proposed by Ablowitz and Ladik.\\[3pt]
{\bf Key words:}  discrete defocusing mKdV equation, Lax pair, Riemann-Hilbert problem,
Deift-Zhou  steepest descent method, long-time asymptotic behavior.

\end{abstract}

{\normalsize }

\section{Introduction}\label{S:int}
\indent

Since Deift and Zhou developed nonlinear steepest descent method in 1993 \cite{deift1993steepest},  it  has been successfully applied    to   analyze   the long-time asymptotic behavior
of a wide variety of continuous integrable systems, such as the mKdV equation,  the  NLS equation, the sine-Gordon equation,  the KdV equation, and the Cammasa-Holm equation \cite{deift1993long,cheng1999long,vartanian2000higher,grunert2009long,de2009long}.
However, there still has been little work on the long-time behavior of the discrete integrable systems, except for the Toda lattice and discrete NLS equation \cite{kruger2009long,yamane2014long}.

In this article, we consider  the following  discrete defocusing  mKdV equation
\begin{equation}\label{eq:1}
\dot q_n = \left(1-q_n^2\right)\left(q_{n+1}-q_{n-1}\right)
\end{equation}
with initial value
\begin{equation}\label{eq:2}
q_n(t=0) = q_n(0),
\end{equation}
where $\dot q_n= dq_n(t)/dt, \ n=0,\pm1,\pm2,\cdots$ and $t $ is continuous time variable. The  equation (\ref{eq:1})
was  first proposed by  Ablowitz and Ladik  \cite{ablowitz1977nonlinear}.
They further showed   that the  equation (1) is the semi-discrete version of the following  classical mKdV equation \cite{ablowitz1977nonlinear}
\[
u_{\tau}(x,\tau)+6u^2u_x(x,\tau)-u_{xxx}(x,\tau)=0.
\]
Narita  found    two kinds of   Miura transformations
\begin{align*}
q_n&=\frac{(u_{n-1}+u_{n+1})u_n-2u_{n-1}u_{n+1}}{(u_{n-1}-u_{n+1})u_n}, \\
q_n&=\frac{u_{n-1}-2u_n+u_{n+1}}{u_{n-1}-u_{n+1}}
\end{align*}
between the   equation (\ref{eq:1}) and  Sokolov-Shabat  equation \cite{narita1997miura}
$$\dot u_n=4(u_{n-1}-u_n)(u_n-u_{n+1})/(u_{n-1}-u_{n+1}).$$
The  some kinds of exact  solutions   of equation  (\ref{eq:1}) were  obtained  by using homotopy analysis method and exp-function method  \cite{zhen2008solitary,zhu2007exp}.
With Darboux transformation,  Wen and Gao obtained the explicit solutions for the equation (\ref{eq:1}) on the discrete spectral of Lax pair \cite{xiao2010darboux}.
However, the solutions of the equation (\ref{eq:1}) with  initial-boundary condition on the continuous spectral  have
been  still unknown  by using inverse scattering transformation or   Riemann-Hilbert approach.
 So in this paper, we would like to apply Riemann-Hilbert approach/Deift-Zhou nonlinear steepest descent method to investigate the long-time behavior of the solution for
the initial value problem of discrete defocusing mKdV equation  (\ref{eq:1})-(\ref{eq:2}).

 The organization of this paper is as follows.
In section 2,   we  introduce   appropriate eigenfunctions and spectral functions  to  reformulate
initial value problem of  discrete  mKdV equation  (\ref{eq:1})-(\ref{eq:2}) as a  Riemann-Hilbert problem(RHP).
From section 3 to section 8,  we transform the RHP to a  model one by using a series of deformations and decompositions.
In section 9, we show the existence and boundedness of the Cauchy operators  during  solving the RHP.  At  the last section, we  provide   the   asymptotic behavior
of the solution of   the    equation (\ref{eq:1}).

\section{Riemann-Hilbert problem}\label{S:inv}
\indent

In this section,  we first investigate the solvability of  the initial value problem  (\ref{eq:1})-(\ref{eq:2}) equation   and then   transform
it into a RHP.    Moreover,  we further  express the potential  $q_n$  of  the discrete defocusing  mKdV equation (\ref{eq:1})  with  a  solution of  the  obtained  RHP.

 Similar  to Proposition 2.1 in \cite{yamane2014long},  we can show the
 following proposition.
\newtheorem{proposition}{Proposition}[section]\label{pro:2.1}
 \begin{proposition}
 Let  $s$  be  a nonnegative integer. If  the initial condition  (\ref{eq:2})
 satisfies
  \begin{align}
  \left\|q_n(0)\right\|_{1,s} &= \sum_{n=-\infty}^{\infty}{(1+\left
  |n\right|)^s\left|q_n(0)\right|} < \infty, \label{eq:3}\\
  \left\|q_n(0)\right\|_\infty &= \sup_n\left|q_n(0)\right| < 1,   \label{eq:4}
  \end{align}
  then the  equation  (\ref{eq:1})  admits an unique solution  in the space
  $$\textit{l}^{1,s}=\{\{c_n\}_{n=-\infty}^\infty :
  \sum_{n=-\infty}^{\infty}{\left(1+\left |n\right|\right)^s\left|c_n\right|}< \infty, \ 0\le t<\infty  \}.$$
 \end{proposition}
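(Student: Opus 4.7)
The plan is to cast the Cauchy problem \eqref{eq:1}--\eqref{eq:2} as an ODE on the Banach space $l^{1,s}$, obtain a local-in-time solution by Picard iteration, and then promote it to a global solution using a conservation law that is specific to the defocusing case and forces $\|q_n(t)\|_\infty$ to stay uniformly bounded away from $1$.

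First I would verify that the nonlinear vector field
\[
F_n(q)=(1-q_n^2)(q_{n+1}-q_{n-1})
\]
maps $l^{1,s}$ to itself and is locally Lipschitz. The embedding $\|q\|_\infty\le\|q\|_{1,s}$ and the shift estimate
\[
\sum_{n}(1+|n|)^s|q_{n\pm1}|\le 2^s\|q\|_{1,s},
\]
which follows from $(1+|m\mp1|)^s\le 2^s(1+|m|)^s$, together with the factorization
\[
F_n(q)-F_n(p)=(p_n-q_n)(p_n+q_n)(q_{n+1}-q_{n-1})+(1-p_n^2)\bigl((q_{n+1}-p_{n+1})-(q_{n-1}-p_{n-1})\bigr),
\]
yield $\|F(q)-F(p)\|_{1,s}\le C(R)\,\|q-p\|_{1,s}$ on any ball $\|q\|_{1,s},\|p\|_{1,s}\le R$. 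The Picard--Lindel\"of theorem then produces a unique maximal solution $q(t)\in C^1([0,T^*);l^{1,s})$.

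The core step is to show $T^*=\infty$ under the defocusing assumption \eqref{eq:4}. I would establish the conservation law
\[
\frac{d}{dt}\sum_{n}\ln\bigl(1-q_n^2(t)\bigr)=-2\sum_{n}q_n(q_{n+1}-q_{n-1})=0,
\]
where the last equality is a telescoping (summation-by-parts) identity that is justified by absolute convergence coming from the $l^{1,s}$ control. Since each summand is nonpositive, the identity
\[
-\ln\bigl(1-q_n^2(t)\bigr)\le -\sum_{m}\ln\bigl(1-q_m^2(0)\bigr)=:K<\infty
\]
gives the uniform bound $\|q_n(t)\|_\infty\le\sqrt{1-e^{-K}}<1$ on the entire interval of existence. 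This is the main obstacle: one must verify that the conserved quantity is well-defined initially (which uses $\|q_n(0)\|_{1,s}<\infty$ to ensure $\sum|q_n(0)|^2<\infty$, hence $\sum|\ln(1-q_n^2(0))|<\infty$) and that differentiation under the sum is legitimate for solutions in $l^{1,s}$.

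With the uniform $l^\infty$ bound in hand, the factor $1-q_n^2$ is bounded, so $\|F(q(t))\|_{1,s}\le 2^{s+1}\|q(t)\|_{1,s}$, and Gr\"onwall yields
\[
\|q(t)\|_{1,s}\le e^{2^{s+1}t}\|q_n(0)\|_{1,s}.
\]
Hence the $l^{1,s}$-norm cannot blow up in finite time, and the local solution extends to all $t\in[0,\infty)$, yielding the claimed unique global solution in $l^{1,s}$. The argument is the direct analogue, with $1-q_n^2$ in place of $1+|q_n|^2$, of Yamane's proof for the discrete defocusing NLS equation.
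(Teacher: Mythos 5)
Your proposal follows essentially the same route as the paper: local existence from Lipschitz ODE theory, global extension via the conserved quantity $\sum_n\ln(1-q_n^2)$ --- which is just the logarithm of the paper's conserved product $c_{-\infty}=\prod_n(1-|q_n|^2)$ --- giving the uniform bound $\sup_n|q_n(t)|\le\sqrt{1-c_{-\infty}}<1$, followed by Gr\"onwall to control the $l^{1,s}$ norm. Your write-up is in fact more explicit than the paper's at two points: you sketch the telescoping argument behind the conservation law (the paper merely asserts it), and you obtain uniqueness from the Picard--Lindel\"of local Lipschitz estimate rather than the paper's cursory remark about the zero initial datum.
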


 \begin{proof}
   We define
  $$c_{-\infty}=\prod_{n=-\infty}^{\infty}\left(1-\left|q_n\right|^2\right),\ \ \rho_0=\left(1-c_{-\infty}\right)^{\frac{1}{2}},$$
then  we  have
  \begin{equation}\label{eq:5s}
  \sup_n|q_n(0)|\leq \rho_0.
  \end{equation}
  It is easy to verify that $c_{-\infty}$  and $\rho_0$ are  both  conserved quantities.
   Therefore,  we consider equation
  (\ref{eq:1}) as an ordinary differential  equation with respect to $t$,   whose  solution belongs  to  the
  Banach space $\textit{l}^{1,s}\subset\textit{l}^\infty$ under the  condition
  (\ref{eq:3}) and (\ref{eq:4}).

 Given the ball in $l^{\infty}$
 $$B:=\{\{q_n(t)\}\in\textit{l}^\infty, \ |q_n(t)-q_n(0)|<\rho_0\},$$
  we show that  the equation (\ref{eq:1}) admits a solution  in the space  $B$.
  Since the right-hand side of (\ref{eq:1}) is Lipschitz continuous
  and bounded, there exits a $t_1$ such that equation (\ref{eq:1}) admits solution in $B$ for $t \in \left(0,t_1\right]$. By the
  standard argument about ordinary differential  equations  in \cite{lang2012differential}, we get that $t_1$ depends on $\rho_0$   so as we have (\ref{eq:5s}).
   Because we have known that $\rho_0$ is conserved, we have
  \begin{equation}
\sup_n|q_n(t_1)|\leq \rho_0.
\end{equation}
 Similarly, we could extend the solution of equation (\ref{eq:1})
   to $t \in \left(t_1,2t_1\right]$.  Repeating the
  procedure above,  we then can extend the solution to $t\in\left[0,\infty\right)$ in the
  space $\textit{l}^\infty$.  Therefore,  for $0 \le t < \infty$, we have
  $$\sup_n|q_n(t)|\leq \rho_0.$$

 Based on the fact that  there is a solution for (\ref{eq:1})-(\ref{eq:2}) in $l^\infty$ shown above,
 we further verify that the solution belongs to the Banach space $l^{1,s}$. From equation (\ref{eq:1}), we get
  $$\left\|\dot q_n(t)\right\|_{1,s} \le Const.\left\|q_n(t)\right\|_{1,s}.$$
   By integrating $\dot q_n(t)$ with respect to $t$,  we get
  $$\left\|q_n(t)\right\|_{1,s} \le \left\|q_n(0)\right\|_{1,s}+Const.\int_{0}^{t}\left\|q_n(\tau)\right\|_{1,s}d\tau.$$
  By virtue of the Gronwall inequality, it follows that
  $\left\|q_n(t)\right\|_{1,s}$ grows  at most exponentially and does
  not blow up.

  For the uniqueness, if we set $$q_n(0)=0, $$ then the problem (\ref{eq:1})-(\ref{eq:2}) only admits a zero solution.
   \end{proof}

In the following, we transform the initial problem   (\ref{eq:1})-(\ref{eq:2})   into   a  RHP.
It is known that the discrete mKdV equation  admits a Lax pair \cite{ablowitz1977nonlinear}
\begin{eqnarray}
 X_{n+1} &=& z^{\sigma_3}X_n +Q_nX_n,  \label{eq:5}\\
 \dot X_n &=& z^{2\sigma_3}X_n+B_nX_n,    \label{eq:6}
\end{eqnarray}
where $X_n$ is a $2\times 2$ matrix, and
\begin{eqnarray*}
 \sigma_3= \left(
 \begin{matrix}
 1 & 0\\
 0 & -1
 \end{matrix}
 \right), \ \    z^{\sigma_3}= \left(
 \begin{matrix}
 z & 0\\
 0 & z^{-1}
 \end{matrix}
 \right),\ \  Q_n = \left(
 \begin{matrix}
 0 & q_n\\
 q_n & 0
 \end{matrix}
 \right), \\
 B_n = \left(
 \begin{matrix}
 -q_{n-1}q_n & q_nz+q_{n-1}z^{-1}\\
 q_nz^{-1}+q_{n-1}z & -q_{n-1}q_n
 \end{matrix}
 \right).
\end{eqnarray*}

The Lax pair (\ref{eq:5})-(\ref{eq:6}) admit the following asymptotic Jost  solutions
 $$X_n(z,t)\thicksim z^{n\sigma_3}\left(
\begin{matrix}
e^{z^2t} & 0 \\
0 &e^{z^{-2}t}
\end{matrix}\right), \ \ n\to\pm\infty.$$
Making transformation
 $$Y_n=z^{-n\sigma_3}\left(
 \begin{matrix}
  e^{-z^2t} & 0 \\
  0 &e^{-z^{-2}t}
 \end{matrix}\right)X_n,$$
 then we have
\begin{equation}
 Y_{n+1}-Y_n=z^{-\sigma_3}\tilde Q_nY_n, \label{eq:7}
\end{equation}
where
\begin{align}
\tilde Q_n&=z^{-n\hat \sigma_3}e^{-\frac{t}{2}(z^2-z^{-2})\hat\sigma_3}Q_n\notag \\
&=\left(\begin{matrix}
0 & q_nz^{-2n}e^{-t(z^2-z^{-2})}\\ q_nz^{2n}e^{t(z^2-z^{-2})} & 0
\end{matrix}\right),\label{eq:10s}
\end{align}
and for   a $2\times2$ matrix   $A$,  the  symbol  $e^{\hat\sigma_3}$  is defined by
\begin{align*}
e^{\hat\sigma_3}A& \equiv  e^{\sigma_3}Ae^{-\sigma_3}.
\end{align*}

Denoting  $Y_n^{(\pm)}$  as  the $2\times2$ eigenfunctions of (9)  such that
\begin{align}
Y_n^{(-)}\to I & \text{ as } n\to -\infty,\label{eq:8} \\
Y_n^{(+)}\to I & \text{ as } n\to +\infty,\label{eq:9}
\end{align}
 then  we have
\begin{align}
Y_n^{(-)}&=I+\sum_{k=-\infty}^{n-1}z^{-\sigma_3}\tilde Q_kY_k^{(-)}\label{eq:13s},\\
Y_n^{(+)}&=I-\sum_{k=n}^\infty z^{-\sigma_3}\tilde Q_kY_k^{(+)} \label{eq:14s}.
\end{align}
By WKB expansion method, it follows that
\begin{align}
Y_n^{(-)}=&z^{-n\hat\sigma_3}e^{\frac{t}{2}(z^2-z^{-2})\hat\sigma_3}\notag\\&\times\left(
\begin{matrix}
1+O(z^{-2},even) & zq_{n-1}+O(z^3, odd)\\
z^{-1}q_{n-1}+O(z^{-3},odd) & 1+O(z^2,even)
\end{matrix}\right),\label{eq:15}\\
Y_n^{(+)}=&z^{-n\hat\sigma_3}e^{\frac{t}{2}(z^2-z^{-2})\hat\sigma_3}\notag\\&\times\left(
\begin{matrix}
c_n^{-1}+O(z^2,even) & -c_n^{-1}z^{-1}q_n+O(z^{-3}, odd)\\
-c_n^{-1}zq_n+O(z^3,odd) & c_n^{-1}+O(z^{-2},even)
\end{matrix}\right),\label{eq:16}
\end{align}
where $O(z^{\pm3},odd)$ ($O(z^{\pm2},even)$) means the remaining part containing $z^{\pm3}, z^{\pm5},\cdots$ ($z^{\pm 2},z^{\pm4},\cdots$, respectively) and
$$c_n=\prod_{k=n}^{\infty}\left(1-\left|q_k\right|^2\right).$$

Rewriting
\begin{align*}
Y_n^{(-)} =(Y_{n,1}^{(-)},Y_{n,2}^{(-)}), \ \  Y_n^{(+)} =(Y_{n,1}^{(+)},Y_{n,2}^{(+)}),
\end{align*}
where  $Y_{n,j}^{(\pm)}$ is the $j$th column of $Y_n^{(\pm)}$,  and letting
\begin{equation}\label{eq:17s}
X_n^{(\pm)}=z^{n\sigma_3}\left(
\begin{matrix}
e^{z^2t} & 0 \\
0 &e^{z^{-2}t}
\end{matrix}\right)Y_n^{(\pm)},
\end{equation}
from (\ref{eq:8}) and (\ref{eq:9}),   we find that  (\ref{eq:5}) admits matrix solutions $X_n^{(-)}=(X_{n,1}^{(-)},X_{n,2}^{(-)})$ and $X_n^{(+)}=(X_{n,1}^{(+)},X_{n,2}^{(+)})$ such that
\begin{align}
X_{n,1}^{(-)} &\to z^ne^{z^2t}\left(
\begin{array}{c}
1 \\ 0
\end{array}\right),&
X_{n,2}^{(-)} &\to z^{-n}e^{z^{-2}t}\left(
\begin{array}{c}
0 \\ 1
\end{array}\right),&
as\  \  n &\to -\infty,  \label{eq:10}\\
X_{n,1}^{(+)} &\to z^ne^{z^2t}\left(
\begin{array}{c}
1 \\ 0
\end{array}\right),&
X_{n,2}^{(+)} &\to z^{-n}e^{z^{-2}t}\left(
\begin{array}{c}
0 \\ 1
\end{array}\right),&
as \  \ n &\to \infty. \label{eq:11}
\end{align}

From  spectral problem   (\ref{eq:5}),    we know  that $$\det(X_n^{(\pm)})\neq 0,$$
which implies that  $X_n^{(\pm)}$ are invertible.  Thus,
by the linearity of the eigenfunction,   there
exist  four functions $a(z), \ b(z),\  a^*(z)$ and $b^*(z)$,  such
that
\begin{align}
X_{n,1}^{(-)}(z,t)&=a(z)X_{n,1}^{(+)}(z,t)+b(z)X_{n,2}^{(+)}(z,t), \label{eq:17} \\
X_{n,2}^{(-)}(z,t)&=b^*(z)X_{n,1}^{(+)}(z,t)+a^*(z)X_{n,2}^{(+)}(z,t),\label{eq:18}
\end{align}
  which  combining with  (\ref{eq:5}),  we can  get the symmetry
$$a^*(z)=\overline{a(\bar z^{-1})},\ \ b^*(z)=\overline{b(\bar z^{-1})}.$$
By using  (\ref{eq:17s}) and (\ref{eq:17}),  we find  that
\begin{align*}
a(z)&=   \frac{\det(X_{n,1}^{(-)},X_{n,2}^{(+)})}{   \det X_n^{(+)}}  = \frac{\det(Y_{n,1}^{(-)},Y_{n,2}^{(+)})}{   \det Y_n^{(+)}},\\
b(z)&=   \frac{ \det(X_{n,1}^{(+)},X_{n,1}^{(-)}} {\det X_n^{(+)}}    = \frac{ \det(Y_{n,1}^{(+)},Y_{n,1}^{(-)}} {\det Y_n^{(+)}} .
\end{align*}

From  (\ref{eq:10s}), (\ref{eq:13s}) and (\ref{eq:14s}), it is worthy of notice that both $Y_{n,1}^{(-)}$ and $Y_{n,2}^{(+)}$ belong to
$C[\{\left|z\right|\geq 1\}\cup\{\infty\}]$ and they are both
analytic outside the unit circle.  While both $Y_{n,2}^{(-)}$ and $Y_{n,1}^{(+)}$ are continuous on the closed unit disc and
analytic inside the unit circle.    Thus,    using (\ref{eq:15})-(\ref{eq:16}),  we know that  $a(z)$ is analytic outside the unit circle, moreover $a\to 1 \text{ as }z\to \infty.$

Besides, $b$ is continuous on the unit circle, and
\begin{equation}\label{eq:19b}
a^*(0)=1.
\end{equation}
By calculating the determinant of $X_n^{(-)}$ and  $X_n^{(+)}$, we obtain that $$|a(z)|^2-|b(z)|^2=c_{-\infty}>0, \ \ {\rm for}\  |z|=1,$$
which implies $a\ne 0$ on the unit circle.
Let  $r=b/a,$  $\bar r=b^*/a^*$,  we then  have
$$\bar r(z)=\overline{r(\bar z^{-1})},\ \ 0\leq|r(z)|<1.$$

We  now construct the RHP for equation  (\ref{eq:1}).
Defining  a $2\times 2$  analytic matrix function on $\mathbb{C}\setminus\{\left|z\right|=1\}$
\begin{equation}\label{eq:19a}
m(z;n,t)=
\begin{cases}
\left(
\begin{matrix}
1 & 0  \\
0 & c_n
\end{matrix}\right)z^{n\hat\sigma_3}e^{-\frac{t}{2}(z^2-z^{-2})\hat\sigma_3}\left(\frac{Y_{n,1}^{(-)}}{a}, Y_{n,2}^{(+)}\right) & \left|z\right|> 1, \\
\left(
\begin{matrix}
1 & 0  \\
0 & c_n
\end{matrix}\right)z^{n\hat\sigma_3}e^{-\frac{t}{2}(z^2-z^{-2})\hat\sigma_3}\left(Y_{n,1}^{(+)}, \frac{Y_{n,2}^{(-)}}{a^*}\right) & \left|z\right|< 1,
\end{cases}
\end{equation}
from   (\ref{eq:15})-(\ref{eq:16}),   we  can    get   the expansion of  $m$
\begin{equation}\label{eq:21s}
m=I+m_1z^{-1}+\cdots, \ z\rightarrow \infty.
\end{equation}
 Moreover, with (\ref{eq:15}), (\ref{eq:19b}) and (\ref{eq:19a}), we  find  that
\begin{equation}\label{eq:22}
q_n=\frac{m_{12}}{z}\Big|_{z=0}=\frac{d}{dz}m_{12}\Big|_{z=0},
\end{equation}
where $m_{12}$ is the (1,2)-entry of the matrix $m$.

By using  (\ref{eq:17}) and (\ref{eq:18}),  we  can derive that
\begin{equation}\label{eq:12}
 m^+=m^-v, \ \ \   z\in \Sigma,
\end{equation}
where  $\Sigma=\{z:\ \left|z\right|=1\}$ is called jump curve (see Figure 1),  and
\begin{equation}
 v(z;n,t)=z^{n\hat\sigma_3}e^{-\frac{t}{2}(z^2-z^{-2})\hat\sigma_3}\left(
  \begin{matrix}
   1-\left|r(z)\right|^2&-\bar r(z)  \\
   r(z) & 1
  \end{matrix}\right),\nonumber
\end{equation}
 is called the jump matrix.

  In summary, we have got the RHP of $m$
  \begin{align*}
  \begin{cases}
  m \  {\rm analytic\ on } & \mathbb{C} \setminus\{\left|z\right|=1\},\\
  m^+=m^-v,  &z\in\Sigma,\\
  m(z;n,t)\to I &as\  z\to\infty.
  \end{cases}
  \end{align*}
Letting
\begin{equation}
\varphi=\frac{t}{2}(z^2-z^{-2})-n\log z,\label{eq:27}
\end{equation}
 we obtain that
\begin{equation}
  v(z;n,t)=e^{-\varphi\hat\sigma_3}\left(
  \begin{matrix}
    1-\left|r(z)\right|^2&-\bar r(z) \\
    r(z) & 1
  \end{matrix}\right).
\end{equation}

\ \ \ \ \ \ \ \ \ \ \ \ \ \ \ \ \ \
\begin{tikzpicture}
\pgfplotsset{
every axis/.append style={
	extra description/.code={
		\node at (0.5, 0) { {Figure 1}.  The jump curve \emph{$\Sigma$}; };
		\node at (0.4,0.81) {\scriptsize{+}};
		\node at (0.5,-0.1) {\normalsize{'+' means the orientation of the contour}};
		\filldraw [black] (0.5,0.5) circle (1.3pt) node [above] {0};
	},
},
}
\begin{axis}[
hide x axis,
hide y axis,
ymin=-1.7,
ymax=1.7,
xmin=-2.0,
xmax=2.0,]
\addplot[domain=0:2*pi, ->, samples=200]
({sin(deg(x))},{cos(deg(x))});
\end{axis}
\end{tikzpicture}

\section{ The first RHP deformation  }\label{S:3}
\indent

In this section,   we change the RHP of $m$     into a new equivalent one,  such that its jump matrix admits a helpful lower/upper triangular factorization.
We take a  real constant number   $ 0<V_0 < 2$,  we will discuss the asymptotic behavior under
assumption $\left|n\right| \leq V_0t,  \ t \to \infty$.

From  (\ref{eq:27}),  the function  $\varphi$  has  four  first order stationary phase points
\[
S_1=A,\ S_2=\bar A,\ S_3=-A,\  \ S_4=-\bar A,
\]
where $$A=2^{-1}\left(\sqrt{2+\frac{n}{t}}-i\sqrt{2-\frac{n}{t}}\right).$$

\ \ \ \ \ \ \ \ \ \ \ \ \ \ \ \ \ \ \ \
 \begin{tikzpicture}
  \pgfplotsset{
   every axis/.append style={
   extra description/.code={
   	\node at (0.5,0) {{Figure 2}. \emph{The sign figure of $Re\varphi$}};
   \node at (0.65,0.5) {\scriptsize{negative}};
   \node at (0.5,0.7){\scriptsize{positive}};
   \node at (0.5,0.3){\scriptsize{positive}};
   \node at (0.35,0.5) {\scriptsize{negative}};
   \node at (0.5,0.9) {\scriptsize{negative}};
   \node at (0.5,0.1) {\scriptsize{negative}};
   \node at (0.1,0.5){\scriptsize{positive}};
   \node at (0.9,0.5){\scriptsize{positive}};
   \node at (0.72,0.29){\scriptsize{$S_1$}};
   \node at (0.72,0.71){\scriptsize{$S_2$}};
   \node at (0.28,0.71){\scriptsize{$S_3$}};
   \node at (0.28,0.29){\scriptsize{$S_4$}};
   \filldraw [black] (0.28,0.352) circle (1pt);
   \filldraw [black] (0.72,0.352) circle (1pt);
   \filldraw [black] (0.28,0.648) circle (1pt);
   \filldraw [black] (0.72,0.648) circle (1pt);
   \filldraw [black] (0.5,0.5) circle (1.3pt) node [above] {0};
   },
   },
   }
   \begin{axis}[
   hide x axis,
   hide y axis,
   ymin=-1.7,
   ymax=1.7,
   ]
   \addplot[domain=0:2]
   ({sqrt(x^2/2+((x^4)*ln(x))/(x^4-1))},{sqrt(x^2/2-(((x^4)*ln(x))/(x^4-1))});
   \addplot[domain=0:2]
   ({-sqrt(x^2/2+((x^4)*ln(x))/(x^4-1))},{-sqrt(x^2/2-(((x^4)*ln(x))/(x^4-1))});
   \addplot[domain=0:2]
   ({-sqrt(x^2/2+((x^4)*ln(x))/(x^4-1))},{sqrt(x^2/2-(((x^4)*ln(x))/(x^4-1))});
   \addplot[domain=0:2]
   ({sqrt(x^2/2+((x^4)*ln(x))/(x^4-1))},{-sqrt(x^2/2-(((x^4)*ln(x))/(x^4-1))});
   \addplot[domain=0:2*pi, samples=200]
   ({sin(deg(x))},{cos(deg(x))});
   \end{axis}
 \end{tikzpicture}

\noindent
Let  $$z=|z|e^{i\theta}.$$ From (\ref{eq:27}),  we have
$${\rm Re}\varphi=\frac{t}{2}\cos(2\theta)(|z|^2-|z|^{-2})-n\log |z|,$$
which leads to  the sign figure of ${\rm Re}\varphi$ as shown in  Figure 2.

We denote $\wideparen{S_jS_{j+1}}$ as the arc on  unit circle from $S_j$ to $S_{j+1}$ such that the central angle of the arc is less than $\pi$ ($S_5=S_1$). Define $\delta$ as an analytic function on $\mathbb{C}\setminus\{|z|=1\}$ satisfying an scalar RHP
\[
\begin{cases}
 \delta_+(z)=\delta_-(z)\left(1-\left|r\right|^2\right) &\ \ \text{on $\wideparen{S_1S_2}\cup \wideparen{S_3S_4}$},\\
 \delta_+(z)=\delta_-(z)  &\ \ \text{on $\wideparen{S_2S_3}\cup \wideparen{S_4S_1}$}, \\
 \delta \to 1             &\ \ as\ z\to\infty,
\end{cases}
\]
 which admits  a unique solution
\begin{equation}
 \delta(z)=e^{-\frac{1}{2\pi i}\left(\int_{S_1}^{S2}+\int_{S_3}^{S_4}\right)\log{\left(1-\left|r(\tau)\right|^2\right)}\frac{d\tau}{\tau-z}},
\end{equation}
where  $\int_{S_j}^{S_{j+1}}$, $j=1, 3$  denote  the integral on $\wideparen{S_jS_{j+1}}$.
Noticing that
$$\sup_{|z|=1}|r(z)|<1,$$
it  can be shown that both $\delta$ and $\delta^{-1}$  are bounded.

We introduce a  transformation on $\mathbb{C}\setminus\{|z|=1\}$
\begin{equation}\label{eq:26}
m^{(1)}=m\delta^{-\sigma_3},
\end{equation}
where  the jump curve still is $\Sigma^{(1)}=\Sigma$, but its orientation  is clockwise on $\wideparen{S_1S_2}\cup\wideparen{S_3S_4}$
and counterclockwise on $\wideparen{S_2S_3}\cup\wideparen{S_4S_1}$  as shown in  Figure 3.

On $\Sigma$,   we find that
\begin{alignat}{2}
 \left(m\delta^{-\sigma_3}\right)_+ &=m_-v\delta_+^{-\sigma_3} =\left(m\delta^{-\sigma_3}\right)_-\delta_-^{\sigma_3}v\delta_+^{-\sigma_3} \label{eq:20},
\end{alignat}
where
\begin{alignat}{2}\label{eq:21}
 \delta_-^{\sigma_3}v\delta_+^{-\sigma_3}
 &=e^{-\varphi\hat\sigma_3}\left(
 \begin{matrix}
  \left(1-\left|r(z)\right|^2\right)\delta_-\delta_+^{-1} & -\bar r(z)\delta_+\delta_- \\
  r(z)\delta_-^{-1}\delta_+^{-1} & \delta_-^{-1}\delta_+
 \end{matrix}\right) \notag\\
 &=
 \begin{cases}
  e^{-\varphi\hat\sigma_3}\left(
  \begin{matrix}
   1-\left|r(z)\right|^2 & -\bar r(z)\delta_+^2 \\
   r(z)\delta_-^{-2} & 1
  \end{matrix}\right), \\
  e^{-\varphi\hat\sigma_3}\left(
  \begin{matrix}
   1 & \frac{-\bar r(z)\delta_+^2}{1-\left|r(z)\right|^2} \\
   \frac{r(z)\delta_-^{-2}}{1-\left|r(z)\right|^2} & 1-\left|r(z)\right|^2
  \end{matrix}\right)
 \end{cases} \notag\\
 &=
 \begin{cases}
  \left(
  \begin{matrix}
   1 & -\bar r(z)\delta_+^2e^{-2\varphi} \\
   0 & 1
  \end{matrix}\right)\left(
  \begin{matrix}
   1 & 0 \\
   r(z)\delta_-^{-2}e^{2\varphi} &1
  \end{matrix}\right)  &\text{on $\wideparen{S_2S_3}\cup \wideparen{S_4S_1}$},\\
  \left(
  \begin{matrix}
   1 &0 \\
   \frac{r\delta_-^{-2}e^{2\varphi}}{1-\left|r\right|^2} & 1
  \end{matrix}\right)\left(
  \begin{matrix}
   1& -\frac{\bar r\delta_+^2e^{-2\varphi}}{1-\left|r\right|^2} \\
   0 &1
  \end{matrix}\right) &\text{on $\wideparen{S_1S_2}\cup \wideparen{S_3S_4}$}.
 \end{cases}\notag
\end{alignat}
Also, we verify that
\begin{equation}
 m^{(1)}=m\delta^{-\sigma_3} \to I \ \  \text{    as $z \to \infty$}. \notag
\end{equation}

So if    setting
\[
v^{(1)}=
 \begin{cases}
   \delta_-^{\sigma_3}v\delta_+^{-\sigma_3} &\text{on $\wideparen{S_1S_2}\cup \wideparen{S_3S_4}$},\\
   \left(\delta_-^{\sigma_3}v\delta_+^{-\sigma_3}\right)^{-1} &\text{on $\wideparen{S_2S_3}\cup \wideparen{S_4S_1}$},
 \end{cases}
\]
then we   get   RHP for $m^{(1)}$   as follows
\begin{equation}\label{eq:19}
 m_+^{(1)}=m_-^{(1)}v^{(1)},  \ \ z\in\Sigma^{(1)}
\end{equation}
To express the jump matrix more concise, we set
\begin{equation}
 \rho(z)=
 \begin{cases}
  -\frac{\bar r}{1-\left|r\right|^2} &\text{on $\wideparen{S_1S_2}\cup \wideparen{S_3S_4}$},\\
  \bar r &\text{on $\wideparen{S_2S_3}\cup \wideparen{S_4S_1}$},
 \end{cases}
\end{equation}
and $$\bar\rho(z)=\overline{\rho(\bar z^{-1})},$$
 then  we have

\begin{equation}\label{eq:34s}
 v^{(1)}=b_-^{-1}b_+,
\end{equation}
where
\begin{equation}\label{eq:35s}
 b_-=\left(
 \begin{matrix}
  1 & 0 \\
  \bar\rho\delta_-^{-2}e^{2\varphi} & 1
 \end{matrix}\right),\ \ \  \ \ \ \
 b_+=\left(
 \begin{matrix}
  1 & \rho\delta_+^2e^{-2\varphi} \\
  0 & 1
 \end{matrix}\right).
\end{equation}

\ \ \ \ \ \ \ \ \ \ \ \ \ \ \ \ \ \
\begin{tikzpicture}
\pgfplotsset{
	every axis/.append style={
		extra description/.code={
			\node at (0.5,0) {Figure 3.  The jump curve \emph{$\Sigma^{(1)}$}};
			\node at (0.75,0.3) {\normalsize{$S_1$}};
			\node at (0.75,0.7) {\normalsize{$S_2$}};
			\node at (0.25,0.7) {\normalsize{$S_3$}};
			\node at (0.25,0.3) {\normalsize{$S_4$}};
			\filldraw [black] (0.286,0.348) circle (1pt);
			\filldraw [black] (0.714,0.348) circle (1pt);
			\filldraw [black] (0.286,0.652) circle (1pt);
			\filldraw [black] (0.714,0.652) circle (1pt);
			\filldraw [black] (0.5,0.5) circle (1.3pt) node [above] {0};
		},
	},
}
\begin{axis}[
hide x axis,
hide y axis,
ymin=-1.7,
ymax=1.7,
xmin=-2.0,
xmax=2.0,]
\addplot[domain=0:2*pi, samples=200]
({sin(deg(x))},{cos(deg(x))});
\addplot[domain=1.25*pi:1.5*pi, ->, samples=200]
({sin(deg(x))},{cos(deg(x))});
\addplot[domain=0.25*pi:0.5*pi, ->, samples=200]
({sin(deg(x))},{cos(deg(x))});
\addplot[domain=1.75*pi:2*pi, -<, samples=200]
({sin(deg(x))},{cos(deg(x))});
\addplot[domain=0.75*pi:pi, -<, samples=200]
({sin(deg(x))},{cos(deg(x))});
\end{axis}
\end{tikzpicture}

\section{The second RHP deformation } \label{S:dec}
\indent

In this section, we would decompose $\rho$ and $\bar\rho$  into the rational part and two small parts respectively,
 so that we can  make the second RHP transformation with augmented jump contour (see  {Figure 4}).

\ \ \ \ \ \ \ \ \ \ \ \ \ \
\begin{tikzpicture}
\pgfplotsset{
	every axis/.append style={
		extra description/.code={
			\node at (0.5,0.02) { {Figure 4.} \ \  Jump curve  $\Sigma^{(2)}$};
			\node at (0.5,-0.05) {\emph{$L$ is in blue and $\bar L$ is in orange}};
			\node at (0.5,0.42){\tiny{$\Omega_2$}};
			\node at (0,0.5){\tiny{$\Omega_1$}};
			\node at (0.5,0.85){\tiny{$\Omega_3$}};
			\node at (0.48,0.75){\tiny{$\Omega_5$}};
			\node at (0.52,0.25){\tiny{$\Omega_5$}};
			\node at (0.5,0.15){\tiny{$\Omega_3$}};
			\node at (0.72,0.48){\tiny{$\Omega_4$}};
			\node at (0.8,0.5){\tiny{$\Omega_6$}};
			\node at (0.28,0.52){\tiny{$\Omega_4$}};
			\node at (0.2,0.5){\tiny{$\Omega_6$}};
			\node at (0.68,0.362){\tiny{$S_1$}};
			\node at (0.68,0.638){\tiny{$S_2$}};
			\node at (0.32,0.362){\tiny{$S_4$}};
			\node at (0.32,0.638){\tiny{$S_3$}};
			\filldraw [black] (0.717,0.35) circle (1pt);
			\filldraw [black] (0.717,0.65) circle (1pt);
			\filldraw [black] (0.283,0.35) circle (1pt);
			\filldraw [black] (0.283,0.65) circle (1pt);
			\node at (0.66,0.43){\tiny{$\bar l_{12}$}};
			\node at (0.66,0.57){\tiny{$\bar l_{21}$}};
			\node at (0.9,0.51){\tiny{$\tilde L_{12}$}};
			\node at (0.79,0.66){\tiny{$l_{21}$}};
			\node at (0.79,0.34){\tiny{$l_{12}$}};
			\filldraw [black] (0.5,0.5) circle (1.3pt) node [above]{0};
		},
	},
}
\begin{axis}[
hide x axis,
hide y axis,
xmin=-2,
xmax=2,
]
\addplot[domain=0:2*pi, samples=200]
({sin(deg(x))},{cos(deg(x))});
\addplot[domain=1.25*pi:1.5*pi, ->, samples=200]
({sin(deg(x))},{cos(deg(x))});
\addplot[domain=0.25*pi:0.5*pi, ->, samples=200]
({sin(deg(x))},{cos(deg(x))});
\addplot[domain=1.75*pi:2*pi, -<, samples=200]
({sin(deg(x))},{cos(deg(x))});
\addplot[domain=0.75*pi:pi, -<, samples=200]
({sin(deg(x))},{cos(deg(x))});
\addplot[domain=0:sqrt(3)/2+1/2*cos(15),blue]
{-tan(15)*x+1/2+sqrt(3)/2*tan(15)};
\addplot[domain=0:1/6,-<,blue]
{-tan(15)*x+1/2+sqrt(3)/2*tan(15)};
\addplot[domain=0:sqrt(3)/2+1/2*cos(15),blue]
{tan(15)*x-1/2-sqrt(3)/2*tan(15)};
\addplot[domain=-sqrt(3)/2-1/2*cos(15):0,blue]
{tan(15)*x+1/2+sqrt(3)/2*tan(15)};
\addplot[domain=-sqrt(3)/2-1/2*cos(15):0, blue]
{-tan(15)*x-1/2-sqrt(3)/2*tan(15)};
\addplot[domain=-1/6:0,>-, blue]
{-tan(15)*x-1/2-sqrt(3)/2*tan(15)};
\addplot[domain=sqrt(3)/2-tan(15)/2:sqrt(3)/2+cos(75)/2,orange]
{x*tan(75)+1/2-tan(75)*sqrt(3)/2};
\addplot[domain=sqrt(3)/2-tan(15)/2+1/25:sqrt(3)/2+cos(75)/2,<-,orange]
{x*tan(75)+1/2-tan(75)*sqrt(3)/2};
\addplot[domain=sqrt(3)/2-tan(15)/2:sqrt(3)/2+cos(75)/2,orange]
{-x*tan(75)-1/2+tan(75)*sqrt(3)/2};
\addplot[domain=-sqrt(3)/2-cos(75)/2:-sqrt(3)/2+tan(15)/2,orange]
{x*tan(75)-1/2+tan(75)*sqrt(3)/2};
\addplot[domain=-sqrt(3)/2-cos(75)/2:-1/25-sqrt(3)/2+tan(15)/2,->,orange]
{x*tan(75)-1/2+tan(75)*sqrt(3)/2};
\addplot[domain=-sqrt(3)/2-cos(75)/2:-sqrt(3)/2+tan(15)/2,orange]
{-x*tan(75)+1/2-tan(75)*sqrt(3)/2};
\addplot[domain=-sqrt(3)/2-cos(75)/2:sqrt(3)/2+cos(75)/2,orange]
{sqrt(5/4+1/(sqrt(2))-x^2)};
\addplot[domain=0:sqrt(3)/2+cos(75)/2,<-,orange]
{sqrt(5/4+1/(sqrt(2))-x^2)};
\addplot[domain=-sqrt(3)/2-cos(75)/2:sqrt(3)/2+cos(75)/2,orange]
{-sqrt(5/4+1/(sqrt(2))-x^2)};
\addplot[domain=-sqrt(3)/2-cos(75)/2:0,->,orange]
{-sqrt(5/4+1/(sqrt(2))-x^2)};
\addplot[domain=-1/2+1/2*sin(15):1/2-1/2*sin(15),blue]
(sqrt(5/4+1/(sqrt(2))-x^2),x);
\addplot[domain=0:1/2-1/2*sin(15),<-,blue]
(sqrt(5/4+1/(sqrt(2))-x^2),x);
\addplot[domain=-1/2+1/2*sin(15):1/2-1/2*sin(15),blue]
(-sqrt(5/4+1/(sqrt(2))-x^2),x);
\addplot[domain=-1/2+1/2*sin(15):0,->,blue]
(-sqrt(5/4+1/(sqrt(2))-x^2),x);
\end{axis}
\end{tikzpicture}

Denoting
 $$d=2^{-1}\min(\sqrt{2+n/t},\sqrt{2-n/t}),$$
and making  the   decomposition
\begin{align}
\rho&=R+h_I+h_{II},\label{eq:36}\\
\bar\rho&=\bar R+\bar h_{I}+\bar h_{II},\label{eq:37s}
\end{align}
we then can  obtain the following estimates
\begin{align}
\left|e^{-2\varphi}h_I\right|&\leq Ct^{\frac{1}{2}-l} &\text{ on $\Sigma$},\label{eq:38s}\\
\left|e^{-2\varphi}h_{II}\right|&\leq Ct^{-q/2} &\text{ on $L$},\label{eq:39s}\\
\left|e^{-2\varphi} R(z)\right|&\leq Ce^{-Ct\epsilon^2} &\text{ on $L^\epsilon$},\label{eq:40s}\\
\left|e^{2\varphi}\bar h_I\right|&\leq Ct^{\frac{1}{2}-l} &\text{ on $\Sigma$},\label{eq:41s}\\
\left|e^{2\varphi}\bar h_{II}\right|&\leq Ct^{-\frac{q}{2}} &\text{ on $\bar L$},\label{eq:42s}\\
\left|e^{2\varphi}\bar R(z)\right|&\leq Ce^{-Ct\epsilon^2} &\text{ on $\bar L^\epsilon$},\label{eq:43s}
\end{align}
where we can see $L$, $\bar L$ in  {Figure 4} and for a positive real number $\epsilon$ ($<d/2$)
\begin{align*}
L^\epsilon &=\{z\in L:\left|z-S_j\right|\geq\epsilon \text{ for }j=1,2,3,4\},\\
\bar L^\epsilon &=\{z\in \bar L:\left|z-S_j\right|\geq\epsilon \text{ for }j=1,2,3,4\}.
\end{align*}

In the following, we first  get down to the decomposition of $\rho$ on the  one side of  $\wideparen{S_1S_2}$;  then  we shall study the decomposition of $\bar\rho$ on the other side of $\wideparen{S_1S_2}$; At last,
 in the similar way, we would investigate the decomposition on the other curves, like $\wideparen{S_2S_3}$, $\wideparen{S_3S_4}$ and $\wideparen{S_4S_1}$.

\subsection{Decomposition of $\rho$ on $\wideparen{S_1S_2}$}
\indent

We consider $\rho$ as a function of $\theta$. If we set $\theta_0=-arg(A)$, then the region of $\rho$ is $(-\theta_0,\theta_0)$, which is  a symmetric interval about $\theta=0$.  Thu.s, we could decompose $\rho$ as the sum of an odd function and an even function
\begin{equation}
 \rho(\theta)=H_e(\theta^2)+\theta H_o(\theta^2),\notag
\end{equation}
$H_o$ and $H_e$  can be   approximated  by Taylor's expansion at the point $\theta^2=\theta_0^2$
\begin{align}
 H_o(\theta^2) &=\mu_0^o+\cdots+\mu_k^o\left(\theta^2-\theta_0^2\right)^k+O(\theta^2-\theta_0^2)^{k+1}, \notag\\
 H_e(\theta^2) &=\mu_0^e+\cdots+\mu_k^e\left(\theta^2-\theta_0^2\right)^k+O(\theta^2-\theta_0^2)^{k+1}. \notag
\end{align}
Define
\begin{align}
 R(\theta)  &=\sum_{l=0}^k\left(\mu_l^e(\theta^2-\theta_0^2)^l+\theta \mu_l^o(\theta^2-\theta_0^2)^l\right) ,\notag \\
 h(\theta)  &=\rho(\theta)-R(\theta) ,\notag  \\
 \alpha(z) &=\left(z-S_1\right)^q\left(z-S_2\right)^q,\notag
\end{align}
where  $k$ and $q$ are two fixed positive integers and  have the relationship $$k=4q+1.$$

Notice that $$R(\pm\theta_0)=\rho(\pm\theta_0), \ \ \theta=-i\log z, \ {\rm on } \ \wideparen{S_1S_2}, $$
we   consider  $R$ as a function of complex number $z$, which could be extended analytically to a fairly large neighborhood of $\wideparen{S_1S_2}$.

We consider a function $$\psi=\varphi/(it),$$
then  (\ref{eq:27}) implies that     $\psi$ strictly  decrease  on  $(-\theta_0,\theta_0)$.
 Thus, we  can  consider $\theta$ as a function of $\psi$ and   $h/\alpha$, which are defined by
\begin{equation}
 (h/\alpha)(\psi)=
  \begin{cases}
   h\left(\theta(\psi)\right)/\alpha\left(\theta(\psi)\right), &\psi(\theta_0)\leq \psi \leq\psi(-\theta_0), \\
   0, &\text{otherwise on the real line}.
  \end{cases}\notag
\end{equation}
We verify that
$$(h/\alpha)(\theta)=O((\theta\pm\theta_0)^{k+1-q}), \ \ \theta\to\pm\theta_0.$$

Since both $S_1$ and $S_2$ are the first order stationary phase points,
   one   gets
   $$\frac{d\theta}{d\psi}=\left(\frac{d\varphi}{itd\theta}\right)^{-1}=O((\theta\pm\theta_0)^{-1}), \ \theta\to\pm\theta_0.$$
 For any integer $1\leq l\leq \frac{3q+2}{2} $, we deduce that
 \begin{equation}\label{eq:30s}
 h/\alpha\in H_\psi^l,
 \end{equation}
where  $H_\psi^l$ norm is uniformly bounded with respect to  $(n,t)$ as long as $|n|\leq V_0t$.

We apply Fourier transform to $(h/\alpha)(\theta)$ to decompose $h$ into two parts
 $$h=h_I+h_{II},$$
 where
\begin{align}
 h_I(\theta)&=\alpha(\theta)\int_t^\infty e^{is\psi(\theta)}(\widehat{h/\alpha})(s)ds,\label{eq:hI} \\
 h_{II}(\theta)&=\alpha(\theta)\int_{-\infty}^t e^{is\psi(\theta)}(\widehat{h/\alpha})(s)ds, \label{eq:hII}
\end{align}
and
 $$\widehat{\left(\frac{h}{\alpha}\right)}(s)=\int_{-\infty}^{\infty}e^{-is\psi}\left(\frac{h}{\alpha}\right)(s)ds.$$ Thus, we have $$\rho=R+h_I+h_{II}.$$
  By using (\ref{eq:30s})-(\ref{eq:hI}) and Schwartz inequality,
 we have
\begin{equation}\label{eq:25}
\left|e^{-2\varphi}h_I\right|\leq Ct^{\frac{1}{2}-l} \ \ \text{on $\wideparen{S_1S_2}$}.
\end{equation}

Let
\begin{align}
 p &=\sqrt{d^2+2d+\frac{1}{2}}-\sqrt{\frac{1}{2}} ,\notag
\end{align}
and introduce a   curve
\begin{equation}
 L_{12}=l_{12}\cup l_{21}\cup \tilde L_{12},
\end{equation}
where
\begin{align*}
 l_{12} &=\{S_1+S_1e^\frac{i\pi}{4}u: 0\leq u\leq p\}, \\
 l_{21} &=\{S_2+S_2e^{-\frac{i\pi}{4}}(p-u): 0\leq u\leq p\}, \\
 \tilde L_{12} &=arc(S_1+S_1e^\frac{i\pi}{4}p,S_2+S_2e^{-\frac{i\pi}{4}}p).
\end{align*}

From (\ref{eq:hII}),   It is shown that  $h_{II}$ can be  analytically  extend   to $\{Re\varphi>0\}$,
 and
\begin{equation}
\left|e^{-2\varphi}h_{II}\right|\leq Ce^{-tRe(i\psi))}\left|\alpha\right|.
\end{equation}
Since that both $S_1$ and $S_2$ are first order saddle points and $\tilde L_{12}$ is a closed curve in the inner of $\{Re\varphi>0\}$, we have
\begin{equation}\label{eq:28}
 Re{i\psi}\geq
 \begin{cases}
  Cu^2 &\text{on $l_{12}$}, \\
  C(p-u)^2 &\text{on $l_{21}$},\\
  C &\text{on $\tilde L_{12}$}.
 \end{cases}
\end{equation}
It is easy to check that
\begin{equation}\label{eq:37}
\left|\alpha\right|=
\begin{cases}
O(u^q) &\text{on $l_{12}$}, \\
O((p-u)^q) &\text{on $l_{21}$}, \\
O(1) &\text{on $\tilde L_{12}$}.
\end{cases}
\end{equation}
With (\ref{eq:28}) and (\ref{eq:37}), we obtain
\begin{equation}\label{eq:30}
 \left|e^{-2\varphi}h_{II}\right|\leq Ct^{-q/2}\  \text{ on $L_{12}$}.
\end{equation}
For sufficiently small positive number $\epsilon$ (\emph{$\epsilon<d/2$}), we define
\begin{align*}
 l_{12}^\epsilon &=\{z\in l_{12}:\left|z-S_1\right|\geq\epsilon\}, \\
 l_{21}^\epsilon &=\{z\in l_{21}:\left|z-S_2\right|\geq\epsilon\}, \\
 L_{12}^\epsilon &=l_{12}^\epsilon\cup  l_{21}^\epsilon\cup\tilde L_{12}.
\end{align*}
Since   $\text{dist}(L_{12}^\epsilon,\{S_1,S_2\})\geq\epsilon,$
it follows that
  $$Re(i\psi)\geq C\epsilon^2,$$
and with the boundedness of $R(z)$ on $L_{12}^\epsilon$, we get
\begin{equation}\label{eq:31}
 \left|e^{-2\varphi}R(z)\right|\leq Ce^{-Ct\epsilon^2}\ \  \text{on $L_{12}^\epsilon$}.
\end{equation}

\subsection{Decomposition of $\bar\rho$ on $\wideparen{S_1S_2}$}
\indent

For the one side of $\wideparen{S_1S_2}$, we have decomposed $\rho$ into three parts $$\rho=R+h_I+h_{II}.$$
Similar result  can be obtained   for $\bar\rho$ on  $\wideparen{S_1S_2}$ and set
$$
\bar p=\frac{\sqrt{2(2-\frac{n}{t})}}{\sqrt{2+\frac{n}{t}}+\sqrt{2-\frac{n}{t}}}.
$$
Consider  the contour   $\bar L_{12}=\bar l_{12}\cup\bar l_{21}$, where
\begin{align*}
 \bar l_{12}&= \{S_1-S_1e^{-\frac{i\pi}{4}}u: 0\leq u\leq \bar p\} ,\\
 \bar l_{21}&=:\{S_2-S_2e^{\frac{i\pi}{4}}(\bar p-u): 0\leq u\leq \bar p\}.
\end{align*}

  Noticing  decomposition    $$\bar\rho=\bar R+\bar h_{I}+\bar h_{II},$$ where
$$
 \bar R(z)=\overline{R(\Bar z^{-1})},
$$
 then in a similar  way  to the derivation of   $h_I$ and $h_{II}$,
 we can get the estimates
\begin{equation}\label{eq:32}
 \left|e^{2\varphi}\bar h_I\right|\leq Ct^{\frac{1}{2}-l}\ \ \text{ on $\wideparen{S_1S_2}$},
\end{equation}
\begin{equation}\label{eq:33}
 \left|e^{2\varphi}\bar h_{II}\right|\leq Ct^{-\frac{q}{2}}\text{ on $\bar L_{12}$}.
\end{equation}
Let
\begin{align*}
 \bar l_{12}^\epsilon &=\{z\in \bar l_{12}:\left|z-S_1\right|\geq\epsilon\}, \\
 \bar l_{21}^\epsilon &=\{z\in \bar l_{21}:\left|z-S_2\right|\geq\epsilon\}, \\
 \bar L_{12}^\epsilon &=l_{12}^\epsilon\cup  l_{21}^\epsilon,
\end{align*}
then for a  fixed $\epsilon$, we get
\begin{equation}\label{eq:34}
 \left|e^{2\varphi}\bar R(z)\right|\leq Ce^{-Ct\epsilon^2} \text{on $\bar L_{12}^\epsilon$}.
\end{equation}

\newtheorem{remark}[proposition]{Remark}
 \begin{remark}\label{re:4.1}
  We have decomposed $\rho$ and $\bar\rho$ on $\wideparen{S_1S_2}$; and we could get (\ref{eq:36})-(\ref{eq:37s}) on $\wideparen{S_2S_3}$, $\wideparen{S_3S_4}$ and $\wideparen{S_4S_1}$: $\rho=R+h_I+h_{II}$, similarly. What's more, we could get (\ref{eq:38s})-(\ref{eq:43s}) in the same way.
 \end{remark}

\subsection{The second RHP deformation }
\indent

According to the decomposition for $\rho$ and $\bar\rho$, we would make the second  deformation  to get a new  RHP equivalent to the original one.   Set
$$b_\pm=b_\pm^e+b_\pm^o,$$ where
\begin{align*}
b_+^e=&\left(
\begin{matrix}
1 & (R+h_{II})\delta^2e^{-2\varphi} \\
0 & 1
\end{matrix}\right),&
b_+^o=&\left(
\begin{matrix}
1 & h_I\delta^2e^{-2\varphi} \\
0 & 1
\end{matrix}\right), \\
b_-^e=&\left(
\begin{matrix}
1 & 0 \\
(\bar R+\bar h_{II})\delta^{-2}e^{2\varphi} & 1
\end{matrix}\right),&
b_-^o=&\left(
\begin{matrix}
1 & 0 \\
\bar h_I\delta^{-2}e^{2\varphi} & 1
\end{matrix}\right).
\end{align*}
Then, for (\ref{eq:34s})-(\ref{eq:37s}), we have $$v^{(1)}=(b_-^ob_-^e)^{-1}b_+^ob_+^e \text{ on } \Sigma^{(1)}.$$ Since both $h_{II}$ and $R$ is analytic on $\Omega_5$ and $\Omega_6$, $b_+^e$ is analytic on $\Omega_5$ and $\Omega_6$; similarly, $b_-^e$ is analytic on $\Omega_3$ and $\Omega_4$. Thus, we  define a new holomorphic function on $\mathbb{C}\setminus\Sigma^{(2)}$   as shown in  {Figure 4}:

\begin{equation}\label{eq:35}
 m^{(2)}=
 \begin{cases}
  m^{(1)} & \text{on $\Omega_1$ and $\Omega_2$}, \\
  m^{(1)}\left(b_-^e\right)^{-1} & \text{on $\Omega_3$ and $\Omega_4$},\\
  m^{(1)}\left(b_+^e\right)^{-1} & \text{on $\Omega_5$ and $\Omega_6$},
 \end{cases}
\end{equation}
 and it is  easy to see    that
$$\lim_{z\to\infty}m^{(2)}=\lim_{z\to\infty}m^{(1)}=I.$$

From (\ref{eq:19}) and (\ref{eq:35}), it follows that
\begin{equation}
 m_+^{(2)}=m_-^{(2)}v^{(2)}\  \text{ on $\Sigma^{(2)},$}
\end{equation}
where
\begin{equation}
  v^{(2)}=(b_-^{(2)})^{-1}b_+^{(2)}
\end{equation}
and
\begin{align*}
b_+^{(2)}&=
\begin{cases}
b_+^e & \text{ on }L,\\
I & \text{ on }\bar L,\\
b_+^o & \text{ on }\Sigma,
\end{cases}&
b_-^{(2)}&=
\begin{cases}
I & \text{ on }L,\\
b_-^e & \text{ on }\bar L,\\
b_-^o & \text{ on }\Sigma
\end{cases}\\
\end{align*}
with
\begin{align*}
w_\pm^{(2)}&=\pm(b_\pm^{(2)}-I),\ \ w^{(2)}=w_+^{(2)}+w_-^{(2)}.
\end{align*}
We  have completed the second RHP transformation.

\section{Reduction of the RHP}\label{Sec:tro}
\indent

In this section, we would like to reduce the previous RHP to a model  RHP.      We  firstly  apply  an integral operator introduced  in  \cite{beals1984scattering}
 to write $m^{(2)}$ in the integral form.
 Then, we reduce this integral to the sum of the decaying part and the integral related to the leading order.
\subsection{Partition of matrices}
\indent

Recall that $m^{(2)}$ can be written in the integral form
\begin{equation}\label{eq:38}
m^{(2)}(z)=I+\frac{1}{2\pi i}\int_{\Sigma^{(2)}}\frac{((I-C_{w^{(2)}})^{-1}I)(\tau)w^{(2)}(\tau)}{\tau-z}d\tau,
\end{equation}
where  $C_{w^{(2)}}=C_+(\cdot w_-^{(2)})+C_-(\cdot w_+^{(2)})$, and  $C_\pm$ are the Cauchy operators defined by
 $$C_\pm(f)(z)=\frac{1}{2\pi i}\int_{\Sigma^{(2)}}\frac{f(\tau)d\tau}{\tau-z_\pm},\ \ f\in L^2(\Sigma^{(2)}),\ \ z\in\Sigma^{(2)}.$$
By (\ref{eq:21s}), (\ref{eq:22}), (\ref{eq:26}) and  (\ref{eq:35}), we  then get
\begin{alignat}{2}
q_n &=\frac{d}{dz}\left((m^{(2)})_{12}\delta^{-1}\right)\Big|_{z=0}\notag =\delta(0)^{-1}\frac{d}{dz}(m^{(2)})_{12}\Big|_{z=0}\notag \\
&=\frac{\delta(0)^{-1}}{2\pi i}\int_{\Sigma^{(2)}}z^{-2}\left[((I-C_{w^{(2)}})^{-1}I)(z)w^{(2)}(z)\right]_{12}dz.\label{eq:39}
\end{alignat}

Now we would like to decompose $w_\pm^{(2)}$ into two parts
 $$w_\pm^{(2)}=w_\pm'+w_\pm^c,$$
  where
\begin{align*}
 w_\pm^c&=w_\pm^a+w_\pm^b,\\
w_\pm^a & :
\begin{cases}
 w_\pm^a&=w_\pm^{(2)} \text{ on } \{|z|=1\}, \\
 w_\pm^a &\text{is equal to the contribution to } w_\pm^{(2)} \text{ from terms of
 	type }\\ &h_{II} \text{ and } \bar h_{II},
 \end{cases} \\
 w_\pm^b &:
 \begin{cases}
 w_\pm^b&=w_\pm^{(2)}-w_\pm^a \text{ on } L^\epsilon\cup\bar L^\epsilon,\\
 w_\pm^b&=0 \text{ on } \Sigma^{(2)}\setminus(L^\epsilon\cup\bar L^\epsilon).
 \end{cases}
\end{align*}
So we  set a contour consisting of four crosses $$\Sigma'=\Sigma^{(2)}\setminus(\Sigma\cup L^\epsilon\cup\bar L^\epsilon)=\cup_{j=1}^4\Sigma_j,$$ where $\Sigma_j$ is the small cross connected to $S_j$ for $j=1, 2, 3, 4$. Note that the orientation of $\Sigma'$ and $\Sigma_j$
 are determined by that of $\Sigma^{(2)}$ (see  {Figure 5}).

 \ \ \ \ \ \ \ \ \ \ \ \ \ \ \ \ \ \
 \begin{tikzpicture}
 \pgfplotsset{
 	every axis/.append style={
 		extra description/.code={
 			\node at (0.5,0) { {Figure 5.} \emph{$\Sigma'$ consisting of four crosses}};
 			\node at (0.92,0.33){$\Sigma_1$};
 			\node at (0.92,0.67){$\Sigma_2$};
 			\node at (0.08,0.67){$\Sigma_3$};
 			\node at (0.08,0.33){$\Sigma_4$};
 			\filldraw [black] (0.803,0.298) circle (1.3pt);
 			\filldraw [black] (0.197,0.298) circle (1.3pt);
 			\filldraw [black] (0.803,0.702) circle (1.3pt);
 			\filldraw [black] (0.197,0.702) circle (1.3pt);
 			\filldraw [black] (0.5,0.5) circle (1.3pt) node [above] {0};
 		}
 	}
 }
 \begin{axis}[
 hide x axis,
 hide y axis,
 xmin=-1.43,
 xmax=1.43,
 ymin=-1.23,
 ymax=1.23,
 ]
 \addplot[domain=0:2*pi, samples=200,/tikz/dashed]
 ({sin(deg(x))},{cos(deg(x))});
 \addplot[domain=sqrt(3)/2-sin(75)/5:sqrt(3)/2+sin{75}/5]
 {-tan(15)*x+1/2+sqrt(3)/2*tan(15)};
 \addplot[domain=sqrt(3)/2-sin(75)/5:sqrt(3)/2-1/2*sin{75}/5,-<]
 {-tan(15)*x+1/2+sqrt(3)/2*tan(15)};
 \addplot[domain=sqrt(3)/2+sin(75)/10:sqrt(3)/2+sin{75}/5,>-]
 {-tan(15)*x+1/2+sqrt(3)/2*tan(15)};
 \addplot[domain=sqrt(3)/2-sin(75)/5:sqrt(3)/2+sin{75}/5]
 {tan(15)*x-1/2-sqrt(3)/2*tan(15)};
 \addplot[domain=sqrt(3)/2-sin(75)/5:sqrt(3)/2-sin{75}/10,->]
 {tan(15)*x-1/2-sqrt(3)/2*tan(15)};
 \addplot[domain=sqrt(3)/2+sin(75)/10:sqrt(3)/2+sin{75}/5,<-]
 {tan(15)*x-1/2-sqrt(3)/2*tan(15)};
 \addplot[domain=-sqrt(3)/2-sin(75)/5:-sqrt(3)/2+sin{75}/5]
 {tan(15)*x+1/2+sqrt(3)/2*tan(15)};
 \addplot[domain=-sqrt(3)/2-sin(75)/5:-sqrt(3)/2-sin{75}/10,->]
 {tan(15)*x+1/2+sqrt(3)/2*tan(15)};
 \addplot[domain=-sqrt(3)/2+sin(75)/10:-sqrt(3)/2+sin{75}/5,<-]
 {tan(15)*x+1/2+sqrt(3)/2*tan(15)};
 \addplot[domain=-sqrt(3)/2-sin(75)/5:-sqrt(3)/2+sin{75}/5]
 {-tan(15)*x-1/2-sqrt(3)/2*tan(15)};
 \addplot[domain=-sqrt(3)/2-sin(75)/5:-sqrt(3)/2-sin{75}/10,-<]
 {-tan(15)*x-1/2-sqrt(3)/2*tan(15)};
 \addplot[domain=-sqrt(3)/2+sin(75)/10:-sqrt(3)/2+sin{75}/5,>-]
 {-tan(15)*x-1/2-sqrt(3)/2*tan(15)};
 \addplot[domain=sqrt(3)/2-sin(15)/5:sqrt(3)/2+sin(15/5)]
 {x*tan(75)+1/2-tan(75)*sqrt(3)/2};
 \addplot[domain=sqrt(3)/2-sin(15)/5:sqrt(3)/2-sin(15)/10,-<]
 {x*tan(75)+1/2-tan(75)*sqrt(3)/2};
 \addplot[domain=sqrt(3)/2+sin(15)/10:sqrt(3)/2+sin(15)/5,>-]
 {x*tan(75)+1/2-tan(75)*sqrt(3)/2};
 \addplot[domain=sqrt(3)/2-sin(15)/5:sqrt(3)/2+sin(15/5)]
 {-x*tan(75)-1/2+tan(75)*sqrt(3)/2};
 \addplot[domain=sqrt(3)/2-sin(15)/5:sqrt(3)/2-sin(15)/10,->]
 {-x*tan(75)-1/2+tan(75)*sqrt(3)/2};
 \addplot[domain=sqrt(3)/2+sin(15)/10:sqrt(3)/2+sin(15)/5,<-]
 {-x*tan(75)-1/2+tan(75)*sqrt(3)/2};
 \addplot[domain=-sqrt(3)/2-sin(15)/5:-sqrt(3)/2+sin(15)/5]
 {x*tan(75)-1/2+tan(75)*sqrt(3)/2};
 \addplot[domain=-sqrt(3)/2-sin(15)/5:-sqrt(3)/2-sin(15)/10,-<]
 {x*tan(75)-1/2+tan(75)*sqrt(3)/2};
 \addplot[domain=-sqrt(3)/2+sin(15)/10:-sqrt(3)/2+sin(15)/5,>-]
{x*tan(75)-1/2+tan(75)*sqrt(3)/2};
 \addplot[domain=-sqrt(3)/2-sin(15)/5:-sqrt(3)/2+sin(15/5)]
 {-x*tan(75)+1/2-tan(75)*sqrt(3)/2};
 \addplot[domain=-sqrt(3)/2-sin(15)/5:-sqrt(3)/2-sin(15)/10,->]
 {-x*tan(75)+1/2-tan(75)*sqrt(3)/2};
 \addplot[domain=-sqrt(3)/2+sin(15)/10:-sqrt(3)/2+sin(15)/5,<-]
 {-x*tan(75)+1/2-tan(75)*sqrt(3)/2};
 \end{axis}
 \end{tikzpicture}

\newtheorem{lemma}[proposition]{Lemma}
\begin{lemma}
 There exists  a constant $C> 0$, such that
 \begin{align}
 \left|w_\pm^a\right|, \left|w^a\right|&\leq Ct^{-1}\  \text{ on $\Sigma^{(2)}$},\label{eq:40}\\
 \left|w_\pm^b\right|, \left|w^b\right|&\leq Ce^{-\gamma_\epsilon t}\  \text{ on $\Sigma^{(2)}$}.\label{eq:41}
\end{align}
Moreover, we have
\begin{align}
  \left\|w_\pm'\right\|_{L^2(\Sigma')}, \left\|w'\right\|_{L^2(\Sigma')}&\leq Ct^{-1/4}, \label{eq:42}\\
  \left\|w_\pm'\right\|_{L^1(\Sigma')}, \left\|w'\right\|_{L^1(\Sigma')}&\leq Ct^{-1/2}. \label{eq:43}
\end{align}
\end{lemma}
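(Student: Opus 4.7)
The plan is to peel the four estimates apart according to where each piece of the decomposition of $\rho$ and $\bar\rho$ lives, and then exploit the decay estimates \eqref{eq:38s}--\eqref{eq:43s} together with the boundedness of $\delta^{\pm 2}$.

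\textbf{Bounds on $w^a$ and $w^b$.} Recall that $w_\pm^{(2)}=\pm(b_\pm^{(2)}-I)$ and, on $L$, the nonzero off-diagonal entry of $b_+^{(2)}-I$ is $(R+h_{II})\delta^2e^{-2\varphi}$, on $\bar L$ the entry of $b_-^{(2)}-I$ is $(\bar R+\bar h_{II})\delta^{-2}e^{2\varphi}$, while on $\Sigma$ the entries of $b_\pm^o-I$ are $h_I\delta^2e^{-2\varphi}$ and $\bar h_I\delta^{-2}e^{2\varphi}$. By definition, $w_\pm^a$ collects the $h_{II}$/$\bar h_{II}$ contributions on $L\cup\bar L$ together with the full weight on $\Sigma$ (where only $h_I$/$\bar h_I$ appear). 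Using $|\delta^{\pm 2}|\le C$ and the estimates \eqref{eq:38s}, \eqref{eq:39s}, \eqref{eq:41s}, \eqref{eq:42s}, one gets $|w_\pm^a|\le C(t^{1/2-l}+t^{-q/2})$; choosing $l$ and $q$ large enough (say $l\ge 3/2$, $q\ge 2$) yields \eqref{eq:40}. For $w^b$, which is nonzero only on $L^\epsilon\cup\bar L^\epsilon$ and equals the $R$/$\bar R$ contributions there, the exponential decay \eqref{eq:40s}, \eqref{eq:43s} gives \eqref{eq:41} with $\gamma_\epsilon=C\epsilon^2$.

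\textbf{The $L^2$ and $L^1$ estimates on $\Sigma'$.} The remainder $w_\pm'=w_\pm^{(2)}-w_\pm^a-w_\pm^b$ is supported on $\Sigma'=\cup_{j=1}^4\Sigma_j$, and on each cross $\Sigma_j$ it is precisely the $R$/$\bar R$ contribution (times $\delta^{\pm 2}e^{\mp 2\varphi}$). Since $R$ and $\bar R$ extend analytically and are bounded in a neighborhood of the stationary phase points, the task reduces to bounding an exponential factor. Parameterize each branch of $\Sigma_j$ by the arc-length parameter $u\in[0,\epsilon]$ away from $S_j$; because $S_j$ is a simple (first-order) saddle of $\varphi$ and the crosses lie along the steepest descent rays, the standard Taylor expansion gives $\operatorname{Re}(\mp 2\varphi)\le -Ctu^2$ on the appropriate branches, hence $|w_\pm'|\le C e^{-C t u^2}$ on $\Sigma_j$.

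\textbf{Computing the Gaussian integrals.} The change of variable $s=\sqrt{t}\,u$ then yields
\begin{equation}
\|w_\pm'\|_{L^2(\Sigma_j)}^2\le C\int_0^{\epsilon}e^{-2Ctu^2}\,du\le Ct^{-1/2},\qquad
\|w_\pm'\|_{L^1(\Sigma_j)}\le C\int_0^{\epsilon}e^{-Ctu^2}\,du\le Ct^{-1/2},
\end{equation}
and summing over the four crosses gives \eqref{eq:42} and \eqref{eq:43}. The estimates for $w^a$, $w^b$, $w'$ follow from their $\pm$-counterparts via the triangle inequality.

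\textbf{Main obstacle.} The only non-routine point is justifying the quadratic lower bound $\operatorname{Re}(\mp 2\varphi)\ge C t u^2$ uniformly in $|n|\le V_0 t$: one must check that $\varphi''$ at $S_j$ is bounded below uniformly for the whole range of $n/t$ under consideration, which forces $V_0<2$ (so that the stationary points stay away from the unit circle's degenerate points $\pm 1$). Once this uniform nondegeneracy is in hand, every remaining step reduces to a direct computation.
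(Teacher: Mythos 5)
Your proposal is correct and follows essentially the same route as the paper's proof: the pointwise bounds \eqref{eq:40}--\eqref{eq:41} come from the boundedness of $\delta^{\pm 1}$ combined with the decay estimates \eqref{eq:38s}--\eqref{eq:43s}, and the norm bounds \eqref{eq:42}--\eqref{eq:43} come from the Gaussian estimate $|\delta^2 R e^{-2\varphi}|\le C e^{-Ct|z-S_j|^2}$ near each first-order stationary point, integrated to give $t^{-1/2}$ for the $L^1$ norm and for the square of the $L^2$ norm. Your treatment is in fact more explicit than the paper's (in particular about choosing $l$ and $q$ large enough and about the uniform nondegeneracy of $\varphi''(S_j)$ for $|n|\le V_0 t$ with $V_0<2$); the only blemish is a sign slip in your final paragraph, where the quadratic bound should read $\operatorname{Re}(\mp 2\varphi)\le -Ctu^2$ on the relevant branches, consistent with your third paragraph.
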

\begin{proof}
Since  $\delta$ and $\delta^{-1}$ is bounded, from the estimate (\ref{eq:38s})-(\ref{eq:43s}),  we get (\ref{eq:40}) and (\ref{eq:41}),   and from (\ref{eq:28}),   we  get

 $$|\delta^2Re^{-2\varphi}|\leq Const.e^{-Ct|z-S_1|^2}\  \text{ on } l_{12}\cap\Sigma',$$
which leads to
  \begin{align*}
 \int_{l_{12}\cap\Sigma'}|\delta^2Re^{-2\varphi}|dz&\leq Const.t^{-1/2},\\
 \int_{l_{12}\cap\Sigma'}|\delta^2Re^{-2\varphi}|^2dz&\leq Const.t^{-1/2}.
 \end{align*}
Since   all $S_j$ are the first order stationary phase point of $\varphi$,   we   obtain that
  \begin{align*}
 \int_{L\cap\Sigma'}|\delta^2Re^{-2\varphi}|dz&\leq Const.t^{-1/2},\\
 \int_{L\cap\Sigma'}|\delta^2Re^{-2\varphi}|^2dz&\leq Const.t^{-1/2}.
 \end{align*}
   For $\delta^{-2}\bar Re^{2\varphi}$ on $\bar L\cap\Sigma'$,  we   also have
  \begin{align*}
  \int_{\bar L\cap\Sigma'}|\delta^{-2}\bar Re^{2\varphi}|dz&\leq Const.t^{-1/2},\\
  \int_{\bar L\cap\Sigma'}|\delta^{-2}\bar Re^{2\varphi}|^2dz&\leq Const.t^{-1/2}.
  \end{align*}
 Therefore, by simply analyzing the entry of $w_\pm'$, we deduce (\ref{eq:42}) and (\ref{eq:43}).
\end{proof}

\subsection{Some resolvent and estimates}
\indent

We  shall    decompose  $$\int_{\Sigma^{(2)}}z^{-2}((I-C_{w^{(2)}})^{-1}I)(z)w^{(2)}(z)dz,$$
into the principal part and the decaying part
by the resolvent identity,

Under the assumption that both $(I-C_{w^{(2)}})^{-1}$ and $(I-C_{w'})^{-1}$ exist and are bounded, by the second resolvent identity, we have
\begin{align}\label{eq:44}
 &\int_{\Sigma^{(2)}}z^{-2}(I-C_{w^{(2)}})^{-1}Iw^{(2)}\notag\\
 =&\int z^{-2}(I-C_{w'})^{-1}Iw'+\int z^{-2}w^c+\int z^{-2}(I-C_{w'})^{-1}(C_{w^c}I)w^{(2)}\notag\\
 &+\int z^{-2}(I-C_{w'})^{-1}(C_{w'}I)w^c+\int z^{-2}(I-C_{w'})^{-1}C_{w^c}(I-C_{w^{(2)}})^{-1}(C_{w^{(2)}}I)w^{(2)}\notag\\
 =&\int z^{-2}(I-C_{w'})^{-1}Iw'+I+II+III+IV.
\end{align}

Since the length of $\Sigma^{(2)}$ is finite, by (\ref{eq:40}) and (\ref{eq:41}), we have
\begin{align}
\parallel{w_\pm^c}\parallel_{L^s(\Sigma^{(2)})}, \parallel{w^c}\parallel_{L^s(\Sigma^{(2)})}&\leq Ct^{-1} \ \ \text{ for } s=1, 2.\label{eq:55s}
\end{align}
For   $0\notin\Sigma^{(2)}$,   $z^{-2}$ is bounded on $\Sigma^{(2)}$,  so  we obtain
$$|I|\leq Ct^{-1}.$$
Because the Cauchy integral operator is bounded on $L^2(\Sigma^{(2)})$ and
$$\parallel C_\pm\parallel_{L^2\to L^2}\leq 1,$$
we can  get that
\begin{align}\label{eq:56s}
\parallel C_{w^c}I\parallel_{L^2(\Sigma^{(2)})}&\leq\parallel C_+(w_-^c)\parallel_{L^2(\Sigma^{(2)})}+\parallel C_-(w_+^c)\parallel_{L^2(\Sigma^{(2)})}\notag\\
	&\leq\parallel w_-^c\parallel_{L^2(\Sigma^{(2)})}+\parallel w_+^c\parallel_{L^2(\Sigma^{(2)})} \leq Ct^{-1}.
\end{align}

Similarly, from (\ref{eq:42}),
we have
\begin{equation}
\parallel C_{w'}I\parallel_{L^2(\Sigma^{(2)})}=O(t^{-1/4})\ \ \text{ as } t\to \infty. \label{eq:57}
\end{equation}
From   (\ref{eq:40})-(\ref{eq:41}) and $\parallel C_\pm\parallel_{L^2\to L^2}\leq 1$,   for $f\in L^2(\Sigma^{(2)})$ , we can show  that
\begin{align}
\parallel C_{w^c}f\parallel_{L^2(\Sigma^{(2)})}&\leq \parallel C_+(fw_-^c)\parallel_{L^2(\Sigma^{(2)})}+\parallel C_-(fw_+^c)\parallel_{L^2(\Sigma^{(2)})}\nonumber\\
&\leq C\parallel f\parallel_{L^2(\Sigma^{(2)})}(\sup_{z\in\Sigma^{(2)}}|w_-^c(z)|+\sup_{z\in\Sigma^{(2)}}|w_+^c(z)|)\nonumber\\
&\leq Ct^{-1}\parallel f\parallel_{L^2(\Sigma^{(2)})}, \label{eq:58}
\end{align}
by which,  we obtain that
\begin{align*}
|II&|\leq C\parallel C_{w^c}I\parallel_{L^2}\parallel w^{(2)}\parallel_{L^2}\\
&\leq C\parallel C_{w^c}I\parallel_{L^2}(\parallel w'\parallel_{L^2}+\parallel w^c\parallel_{L^2})\\
&\leq Ct^{-1},\\
|III|&\leq C\parallel C_{w'}I\parallel_{L^2}\parallel w^c\parallel_{L^2}\\
&\leq Ct^{-1},\\
|IV|&\leq C\parallel C_{w^c}\parallel_{L^2\to L^2}\parallel C_{w^{(2)}}I\parallel_{L^2}\parallel w^{(2)}\parallel_{L^2}\\
&\leq C\parallel C_{w^c}\parallel_{L^2\to L^2}(\parallel C_{w^c}I\parallel_{L^2}+\parallel C_{w'}I\parallel_{L^2})(\parallel w'\parallel_{L^2}+\parallel w^c\parallel_{L^2})\\
&\leq Ct^{-1}.
\end{align*}

From these estimates, combining (\ref{eq:39}) and  (\ref{eq:44}),  we  obtain  the following equation
\begin{equation}\label{eq:46}
  q_n=\frac{\delta(0)^{-1}}{2\pi i}\int_{\Sigma^{(2)}}z^{-2}\left[(I-C_{w'})^{-1}Iw'\right]_{12}(z)dz+O(t^{-1}).
\end{equation}
Let
 $$C_{w'}^{\Sigma'}=C_+(\cdot\times w_-'\big|_{\Sigma'})+C_-(\cdot\times w_-'\big|_{\Sigma'}),$$
 which is an operator on $2\times2$ Hilbert space $L^2(\Sigma')$.
 Since $w'$ vanishes on $\Sigma^{(2)}\setminus\Sigma'$,  by (2.58) in \cite{deift1993steepest}, we could write (\ref{eq:46}) as
\begin{equation}\label{eq:47}
  q_n=\frac{\delta(0)^{-1}}{2\pi i}\int_{\Sigma'}z^{-2}\left[(I-C_{w'}^{\Sigma'})^{-1}Iw'\right]_{12}(z)dz+O(t^{-1}),
\end{equation}
where  $C_{w'}$ in (\ref{eq:46}) is an operator on $L^2(\Sigma^{(2)})$, whose kernel is $w'$.

\begin{remark}
 We have assumed in this subsection that both $(I-C_{w^{(2)}})^{-1}$ and $(I-C_{w'})^{-1}$ exist and are bounded. In fact, in {Section 9}, we will prove that both $(I-C_{w^{(2)}})^{-1}$ and $(I-C_{w'})^{-1}$ exist and are bounded uniformly as $t\to\infty$.
\end{remark}

\section{Four crosses}\label{Sec:fou}
\indent

 In this section, we may decompose $w'$ into four parts according to the cross. Moreover, we will
 make some estimates  such  that the principal part  is  more accurate for $q_n$.

Define matrix functions on $\Sigma'$
\begin{equation}\label{eq:48}
w^j=
\begin{cases}
w'&\text{on $\Sigma_j$},\\
0 &\text{on $\Sigma'\setminus \Sigma_j$},
\end{cases}
\ \   j=1,2,3,4,
\end{equation}
then  we define integral operators on $L^2(\Sigma')$  with kernels in (\ref{eq:48})
\begin{equation}\label{eq:49}
A_j=C_{w^j}^{\Sigma'}, j=1,2,3,4.
\end{equation}

For $A_j$, we have the following result.
\begin{proposition}\label{pr:6.1}
 Given $j,k=1,2,3,4$ ($j\ne k$), we have
 \begin{eqnarray}
  \|A_jA_k\|_{L^2\to L^2}\leq Ct^{-\frac{1}{2}}, \label{eq:51}\\
  \|A_jA_k\|_{L^\infty\to L^2}\leq Ct^{-\frac{3}{4}}. \label{eq1:52}
 \end{eqnarray}
\end{proposition}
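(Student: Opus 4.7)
The plan is to exploit the geometric separation between the four small crosses $\Sigma_j$: each $\Sigma_j$ is localized within distance $\epsilon$ of the stationary phase point $S_j$, and since $S_1,\ldots,S_4=\pm A,\pm\bar A$ are pairwise distinct with a uniform lower bound on their mutual distance for $|n|\le V_0t$ with $0<V_0<2$, one has $\mathrm{dist}(\Sigma_j,\Sigma_k)\ge c>0$ whenever $j\ne k$, with $c$ independent of $t$. Consequently, for $\tau\in\Sigma_j$ and $\eta\in\Sigma_k$ the Cauchy kernel $1/(\eta-\tau)$ is bounded by a constant uniform in $t$. This is the single geometric fact that powers everything.

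Using this, for any $f\in L^2(\Sigma')$ and $\tau\in\Sigma_j$ (so $\tau\notin\Sigma_k$, and the boundary values $C_+,C_-$ agree),
\[
(A_k f)(\tau)=\frac{1}{2\pi i}\int_{\Sigma_k}\frac{f(\eta)w_-^k(\eta)}{\eta-\tau}\,d\eta+\frac{1}{2\pi i}\int_{\Sigma_k}\frac{f(\eta)w_+^k(\eta)}{\eta-\tau}\,d\eta .
\]
Bounding the kernel and applying Cauchy--Schwarz gives, together with (\ref{eq:42}),
\[
\|A_kf\|_{L^\infty(\Sigma_j)}\le C\|w^k\|_{L^2(\Sigma_k)}\|f\|_{L^2(\Sigma')}\le Ct^{-1/4}\|f\|_{L^2(\Sigma')},
\]
whereas Hölder with the $L^1$ estimate (\ref{eq:43}) yields
\[
\|A_kf\|_{L^\infty(\Sigma_j)}\le C\|w^k\|_{L^1(\Sigma_k)}\|f\|_{L^\infty(\Sigma')}\le Ct^{-1/2}\|f\|_{L^\infty(\Sigma')}.
\]

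Next, I would apply $A_j$ to $g:=A_kf$. Because the densities $w_\pm^j$ of $A_j$ are supported in $\Sigma_j$, the standard $L^2$-boundedness $\|C_\pm\|_{L^2\to L^2}\le 1$ gives
\[
\|A_jg\|_{L^2(\Sigma')}\le\|gw_-^j\|_{L^2(\Sigma_j)}+\|gw_+^j\|_{L^2(\Sigma_j)}\le C\|g\|_{L^\infty(\Sigma_j)}\|w^j\|_{L^2(\Sigma_j)}\le Ct^{-1/4}\|g\|_{L^\infty(\Sigma_j)}.
\]
Chaining the two steps produces
\[
\|A_jA_kf\|_{L^2(\Sigma')}\le Ct^{-1/2}\|f\|_{L^2(\Sigma')},\qquad \|A_jA_kf\|_{L^2(\Sigma')}\le Ct^{-3/4}\|f\|_{L^\infty(\Sigma')},
\]
which are exactly (\ref{eq:51}) and (\ref{eq1:52}).

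The only real obstacle is the uniform-in-$t$ control of $\mathrm{dist}(\Sigma_j,\Sigma_k)$: one must check that $A=2^{-1}(\sqrt{2+n/t}-i\sqrt{2-n/t})$ stays a bounded distance from both the real and imaginary axes as $(n,t)$ varies in $|n|\le V_0t$, $0<V_0<2$, so that $\pm A,\pm\bar A$ do not coalesce; once $\epsilon$ is chosen small compared to this minimum separation, the kernel bound $|\eta-\tau|^{-1}\le C$ holds uniformly. Past this geometric verification, the proposition is a clean consequence of Lemma~5.1 and the elementary $L^p$ inequalities above, with no further analytical input.
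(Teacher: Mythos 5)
Your argument is correct and is precisely the separated-supports argument of Lemma 3.5 in Deift--Zhou, which is all the paper itself invokes (it gives no independent proof, only the citation together with the bounds (\ref{eq:42})--(\ref{eq:43})). You have in fact supplied the details the paper omits, including the uniform lower bound on $\mathrm{dist}(\Sigma_j,\Sigma_k)$ coming from $|S_j-S_k|\ge\min\bigl(\sqrt{2-n/t},\sqrt{2+n/t}\bigr)\ge\sqrt{2-V_0}$ and $\epsilon<d/2$, so nothing further is needed.
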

The proof of this proposition is similar to Lemma 3.5  in \cite{deift1993steepest}. Referring to (\ref{eq:42}) and (\ref{eq:43}), we only have to replace $A'$,  $B'$ in Lemma 3.5 of \cite{deift1993steepest} with $A_j$,  $A_k$ respectively; then, Proposition \ref{pr:6.1} follows.

Assuming that as $t\to\infty$, $(1-A_j)^{-1}$ exists and is uniformly bounded which  would be proven in   {Section \ref{S:9}}.
because  $C_{w'}^{\Sigma'}=\sum_jA_j$,   by direct calculation, we get
$$1-\sum_{j\neq k}A_jA_k(1-A_k)^{-1}=(1-C_{w'}^{\Sigma'})[1+\sum_jA_j(1-A_j)^{-1}].$$
We verify that for any $f\in L^(\Sigma')$,
\begin{align*}
\parallel A_j(f)\parallel_{L^2}&\leq \parallel C_+(fw_-^j)\parallel_{L^2}+\parallel C_-(fw_+^j)\parallel_{L^2}\\
&\leq (\parallel w_+^j\parallel_{L^\infty}+\parallel w_-^j\parallel_{L^\infty})\parallel f\parallel_{L^2}\\
&\leq2\parallel w_\pm'\parallel_{L^\infty}\parallel f\parallel_{L^2}\leq Ct^{-1}\parallel f\parallel_{L^2},
\end{align*}
which implies
\begin{equation}
\parallel A_j\parallel_{L^2\to L^2}=O(t^{-1}).\label{eq:65s}
\end{equation}
With {Proposition \ref{pr:6.1}}, as $t\to\infty$, we have
\begin{equation}\label{eq:51s}
 (1-C_{w'}^{\Sigma'})^{-1}=[1+\sum_jA_j(1-A_j)^{-1}][1-\sum_{j\neq k}A_jA_k(1-A_k)^{-1}]^{-1},
\end{equation}
then we  obtain  that
\begin{align}\label{eq:53}
 &\int z^{-2}[(1-C_{w'}^{\Sigma'})^{-1}I](z)w'(z)\notag\\
 =&\int z^{-2}[I+\sum_jA_j(1-A_j)^{-1}I](z)w'(z)dz+\int z^{-2}\{[1+\sum_jA_j(1-A_j)^{-1}]\notag\\
 &\times[1-\sum_{j\neq k}A_jA_k(1-A_k)^{-1}]^{-1}\sum_{j\neq k}A_jA_k(1-A_k)^{-1}I\}(z)w'(z)dz.
\end{align}
and
\begin{equation}\label{eq:53s}
A_jA_k(1-A_k)^{-1}=A_jA_k+A_jA_k(1-A_k)^{-1}A_k.
\end{equation}
 By the definition of $A_k$, we learn that $A_kI\in L^2(\Sigma')$ and
 $$\left\|A_kI\right\|_{L^2}\leq C\left\|w^k\right\|_{L^2}\leq C\left\|w'\right\|_{L^2}\leq Ct^{-1/4}.$$
 From the uniform boundedness of the operator $(1-A_k)^{-1}$, we obtain  $(1-A_k)^{-1}A_kI$ belong to $L^2(\Sigma')$ and
  \begin{equation}\label{eq:54s}
  	\left\|(1-A_k)^{-1}A_kI\right\|_{L^2}\leq Ct^{-1/4}.
  	\end{equation}
  Combining (\ref{eq:51}), (\ref{eq1:52}), (\ref{eq:53s}) and (\ref{eq:54s}), we get that
   $$[\sum_{j\neq k}A_jA_k(1-A_k)^{-1}]I\in L^2(\Sigma'),$$
    and
    $$\left\|[\sum_{j\neq k}A_jA_k(1-A_k)^{-1}]I\right\|_{L^2(\Sigma')}\leq Ct^{-3/4}.$$
    If we  can  verified that
\begin{equation}\label{eq:52}
\left\|[1+\sum_jA_j(1-A_j)^{-1}][1-\sum_{j\neq k}A_jA_k(1-A_k)^{-1}]^{-1}\right\|_{L^2\to L^2}
\end{equation}
is bounded uniformly as $t\to\infty$,  by  H\"older's inequality, we can write (\ref{eq:53})  in the form
\begin{align}\label{eq:54}
 &\int z^{-2}[(1-C_{w'}^{\Sigma'})^{-1}I](z)w'(z)\notag\\
 =&\int z^{-2}[I+\sum_jA_j(1-A_j)^{-1}I](z)w'(z)dz+O(t^{-1}).
\end{align}
With equation (\ref{eq:42})-(\ref{eq:43}),  we obtain that
 $$[1-\sum_{j\neq k}A_jA_k(1-A_k)^{-1}]^{-1}, \ \ t\to \infty$$ exists and is a bounded   operator on $L^2(\Sigma')$. Also, by (\ref{eq:65s}), we know  that  $$1+\sum_jA_j(1-A_j)^{-1}$$ exists and is bounded on $L^2(\Sigma')$. Thus, we have proven the uniform boundedness of  (\ref{eq:52}).

With (\ref{eq:53}),   if we  prove that for any pair of different numbers in $\{1,2,3,4\}$($j\neq k$) and some constant C
\begin{equation}\label{eq:55}
  \left|\int_{\Sigma'}z^{-2}[A_j(1-A_j)^{-1}I](z)w^k(z)\right|\leq Ct^{-1},
\end{equation}
and  have the following theorem.
\newtheorem{theorem}[proposition]{Theorem}
\begin{theorem}\label{thm:7.3}
The potential function  $q_n$  admits  the  asymptotic estimate
 \begin{equation}\label{eq:56}
  q_n=\frac{\delta(0)^{-1}}{2\pi i}\sum_j\int_{\Sigma_j}z^{-2}\{[(1-A_j)^{-1}I](z)w^j(z)\}_{12}dz+O(t^{-1}), \  t\to\infty.
 \end{equation}
\end{theorem}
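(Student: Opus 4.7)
The plan is to start from the representation (\ref{eq:47}) and to pipe together the reductions already set up in the preceding subsections: the factorization (\ref{eq:51s}), the simplification to the ``diagonal'' integral (\ref{eq:54}), the decomposition $w'=\sum_{k=1}^4 w^k$ coming from (\ref{eq:48}), and finally the cross-term estimate (\ref{eq:55}).

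First I would substitute the factorization (\ref{eq:51s}) of $(1-C_{w'}^{\Sigma'})^{-1}$ into (\ref{eq:47}). Applied to $I$, this produces two pieces: a ``diagonal'' piece $[I+\sum_j A_j(1-A_j)^{-1}]I$ and a remainder built from $\sum_{j\neq k}A_jA_k(1-A_k)^{-1}I$. The $L^2(\Sigma')$ norm of the remainder is $O(t^{-3/4})$ by Proposition~\ref{pr:6.1} combined with $\|A_kI\|_{L^2}\leq C\|w'\|_{L^2}=O(t^{-1/4})$; invoking the uniform-in-$t$ boundedness of the operator in (\ref{eq:52}), pairing against $w'$ by Cauchy--Schwarz, and using (\ref{eq:42}) yields an $O(t^{-1})$ error, producing (\ref{eq:54}).

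Second, I would substitute $w'=\sum_k w^k$ into (\ref{eq:54}) and expand the resulting integral into diagonal ($j=k$) and off-diagonal ($j\neq k$) contributions. Using the algebraic identity $I+A_k(1-A_k)^{-1}=(1-A_k)^{-1}$ together with the fact that $w^k$ is supported on $\Sigma_k$, the diagonal terms collapse into the target sum $\sum_k\int_{\Sigma_k}z^{-2}[(1-A_k)^{-1}I]w^k$. Every off-diagonal term is of the form $\int_{\Sigma'}z^{-2}[A_j(1-A_j)^{-1}I]w^k$ with $j\neq k$, which is precisely what (\ref{eq:55}) is designed to handle; absorbing each of the twelve such contributions into the remainder gives an overall $O(t^{-1})$ error and delivers (\ref{eq:56}).

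The main obstacle is the cross-term estimate (\ref{eq:55}). The guiding intuition is that the cross $\Sigma_k$ and the cross $\Sigma_j$ are separated by a positive distance $\mathrm{dist}(\Sigma_j,\Sigma_k)>0$, so the Cauchy kernels embedded in $A_j$ act as smooth bounded integral operators when tested against functions supported on $\Sigma_k$; combined with the boundedness of $z^{-2}$ away from $0\notin\Sigma^{(2)}$ and the $L^{1}$ decay $\|w^j\|_{L^1}, \|w^k\|_{L^1}=O(t^{-1/2})$ coming from (\ref{eq:43}), one anticipates the required $O(t^{-1})$ bound. A secondary technical point, deferred to Section~\ref{S:9}, is the uniform boundedness of $(1-A_j)^{-1}$ on $L^2(\Sigma')$, without which the cascade of resolvent manipulations above is merely formal.
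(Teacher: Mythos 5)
Your proposal is correct and follows essentially the same route as the paper: pass from (\ref{eq:47}) to (\ref{eq:54}) via the factorization (\ref{eq:51s}) and Proposition \ref{pr:6.1}, split $w'=\sum_k w^k$, collapse the diagonal terms with the identity $1+A_k(1-A_k)^{-1}=(1-A_k)^{-1}$, and absorb the off-diagonal terms using the cross-separation estimate (\ref{eq:55}). Your sketch of (\ref{eq:55}) also matches the paper's, which makes it rigorous by further splitting $A_j(1-A_j)^{-1}I=A_jI+A_j(1-A_j)^{-1}A_jI$ and bounding the two pieces by $\|w^j\|_{L^1}\|w^k\|_{L^1}$ and $\|A_jI\|_{L^2}\|w^j\|_{L^2}\|w^k\|_{L^1}$ respectively.
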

\begin{proof}

By using  (\ref{eq:47}), (\ref{eq:54}) and (\ref{eq:55}), we could complete the proof of (\ref{eq:56}) by consider the following relation
 $$A_j(1-A_j)^{-1}=(1-A_j)^{-1}-1.$$
\end{proof}
To prove (\ref{eq:55}), considering
\[
 [A_j(1-A_j)^{-1}I]w^k=A_j(1-A_j)^{-1}A_jIw^k+A_jIw^k,
\]
we obtain  the  estimate
\begin{align*}
 \int_{\Sigma_k}|A_jIw^k|=&\int_{\Sigma_k}\left|\left(\int_{\Sigma_j}\frac{w^j(\eta)d\eta}{\eta-\zeta}\right)w^k(\zeta)\right|d\zeta \\
 \leq &C\parallel w^j\parallel_{L^1(\Sigma_j)}\parallel w^k\parallel_{L^1(\Sigma_k)} \leq Ct^{-1},
\end{align*}
\begin{align*}
  \int_{\Sigma_k}|A_j(1-A_j)^{-1}A_jIw^k|=&\int_{\Sigma_k}\left|\left(\int_{\Sigma_j}\frac{(1-A_j)^{-1}A_jI(\eta)w^j(\eta)d\eta}{\eta-\zeta}\right)w^k(\zeta)\right|d\zeta \\
  \leq&C\int_{\Sigma_j}|(1-A_j)^{-1}A_jI(\eta)w^j(\eta)|d\eta\parallel w^k\parallel_{L^1(\Sigma_k)} \\
  \leq&C\parallel(1-A_j)^{-1}A_jI\parallel_{L^2(\Sigma_j)}\parallel w^j\parallel_{L^1(\Sigma_j)}\parallel w^k\parallel_{L^1(\Sigma_k)} \\
  \leq&C\parallel A_jI\parallel_{L^2(\Sigma_j)}\parallel w^j\parallel_{L^2(\Sigma_j)}\parallel w^k\parallel_{L^1(\Sigma_k)} \\
  \leq &C\parallel A_jI\parallel_{L^2(\Sigma_j)}\parallel w'\parallel_{L^2(\Sigma_j)}^2\leq Ct^{-1}.
\end{align*}
Then (\ref{eq:55}) follows.

With Theorem \ref{thm:7.3},    the original RHP can be reduced to four RHPs on four separated crosses and the leading order is only about these four RHPs.

\section{Rotation and Scaling}
\indent

In this section, we would introduce scaling operators concerning each stationary phase point.

Before getting down to the scaling operator, we shall first make some preparations. From (\ref{eq:27}), we show that for any $j=1,2,3,4$,
\begin{equation}
\varphi''(S_j)=(-1)^j2iS_j^{-2}\sqrt{4t^2-n^2}.
\end{equation}
Let   $T_1=T_2=1$ and $T_3=T_4=-1$,   we    define the following quantities for $j=1,2,3,4$
\begin{align}
 \delta_j(z)&=e^{(-1)^{j-1}\frac{1}{2\pi i}\int_{T_j}^{S_j}\log(1-\left|r(\tau)\right|^2)\frac{d\tau}{\tau-z}},\\
  \nu_j&=-\frac{1}{2\pi}\log(1-\left|r(S_j)\right|^2),\ \ \
  l_j(z)=\int_{T_j}^{S_j}\frac{d\tau}{\tau-z},\\
  \chi_j(z)&=\frac{1}{2\pi i}\int_{T_j}^{S_j}\log\frac{1-\left|r(\tau)\right|^2}{1-\left|r(S_j)\right|^2}\frac{d\tau}{\tau-z},
\end{align}
where the symbol $\int_{T_j}^{S_j}$ stands for the integral on the arc $\wideparen{T_jS_j}$ from $T_j$ to $S_j$.
These quantities have the relationship
\begin{align}\label{eq:62}
\delta_j(z)&=e^{(-1)^{j-1}(\chi_j(z)+i\nu_jl_j(z))} \notag\\
&=\left(\frac{S_j-z}{T_j-z}\right)^{(-1)^{j-1}\nu_j}e^{(-1)^{j-1}\chi_j(z)}.
\end{align}
We verify that $\delta(z)=\prod_{j=1}^4\delta_j(z)$ by making product directly and define
\[
 \hat\delta_j(z)=\delta(z)/\delta_j(z).
\]

Since each $S_j$ is first order stationary phase point, we have
\begin{equation}\label{eq:80}
 \varphi(z)=\varphi(S_j)+\varphi''(S_j)(z-S_j)^2+\varphi_j(z),
\end{equation}
where $$\varphi_j(z)=O(\left|z-S_j\right|^3).$$
We extend the four small crosses to four infinite ones
\begin{align*}
\begin{cases}
\Sigma(S_j)=&(S_j+S_je^{i\pi/4}\mathbb{R})\cup(S_j+S_je^{-i\pi/4}\mathbb{R}),\\ &\text{ oriented inward like }\Sigma_j\text{ for }j=1,3,\\
\Sigma(S_j)=&(S_j+S_je^{i\pi/4}\mathbb{R})\cup(S_j+S_je^{-i\pi/4}\mathbb{R}),\\  &\text{ oriented outward like }\Sigma_j\text{ for }j=2,4,
\end{cases}
\end{align*}
and define the contours
\begin{align*}
\Sigma_j(0)=
\begin{cases}
(e^{i\pi/4}\mathbb{R})\cup(e^{-i\pi/4}\mathbb{R}), \text{ oriented inward like }\Sigma_j, &j=1,3,\\
(e^{i\pi/4}\mathbb{R})\cup(e^{-i\pi/4}\mathbb{R}), \text{ oriented outward like }\Sigma_j, &j=2,4.
\end{cases}
\end{align*}
In fact, we have a mapping from $\Sigma_j(0)$ to $\Sigma(S_j)$
\begin{align*}
M_j: \Sigma_j(0)&\to\Sigma(S_j),\\
z&\mapsto\beta_jz+S_j,
\end{align*}
where
\begin{align}
&\beta_j=\frac{1}{2}(4t^2-n^2)^{-1/4}iS_j(-1)^j. \label{pop5}
\end{align}
Directly calculating we find that $$\varphi''(S_j)\beta_j^2=(-1)^{j+1}\frac{i}{2}.$$

We introduce the scaling operator
\begin{align*}
 N_j: (C^0\cup L^2)(\Sigma(S_j))&\to (C^0\cup L^2)(\Sigma_j(0),) \\
 f(z)&\mapsto (N_jf)(z)=f((\beta_jz+S_j),
\end{align*}
which is the pull-back of $M_j$.

\ \ \ \ \ \ \ \ \ \ \ \ \ \ \ \ \
\begin{tikzpicture}
\draw[->] (3,2.8) -- (3.8,2.8);
\pgfplotsset{
	every axis/.append style={
		extra description/.code={
			\node at (0.5,0) { {Figure 6.} \emph{the mapping $M_1^{-1}$}};
			\filldraw [black] (0.25,0.5) circle (1.3pt) node [above] {\scriptsize{0}};
			\node at(0.39,0.4){\scriptsize{$S_1$}};
			\filldraw [black] (0.36,0.425) circle (1.3pt);
			\node at(0.75,0.57){\scriptsize{0}};
			\filldraw [black] (0.75,0.5) circle (1.3pt);
			\node at(0.5,0.53){$M_1^{-1}$};
			\node at(0.25,0.29){\small{$\Sigma(S_1)$}};
			\node at(0.75,0.35){\small{$\Sigma_1(0)$}};
		}
	}
}
\begin{axis}[
hide x axis,
hide y axis,
xmin=-4,
xmax=4,
ymin=-3.4,
ymax=3.4,
]
\addplot[domain=-2-1.5*cos(15):-2+1.5*cos(15)]
{tan(15)*(x+2-sqrt(3)/2)-1/2};
\addplot[domain=-1.75:-1]
{-tan(75)*(x+2-sqrt(3)/2)-1/2};
\addplot[domain=0:2*pi, samples=200,/tikz/dashed]
({cos(deg(x))-2},{sin(deg(x))});
\addplot[domain=0.5:3.5]
{x-2};
\addplot[domain=0.5:3.5]
{-x+2};
\end{axis}
\end{tikzpicture}

By equation  (\ref{eq:62}) and  (\ref{eq:80}), we obtain
\begin{align}\label{eq:64}
 N_j(e^{-\varphi})(z)&=e^{-\varphi(S_j)-\frac{\varphi''(S_j)}{2}\beta_j^2z^2-N_j\varphi_j(z)}\notag\\
 &=S_j^ne^{-\frac{t}{2}(S_j^2-S_j^{-2})+(-1)^j\frac{i}{4}z^2-N_j\varphi_j(z)},
 \end{align}
\begin{align}\label{eq:65}
 N_j\delta_j(z)=&\delta_j(\beta_jz+S_j) +\left(\frac{\beta_j}{\beta_jz+S_j-T_j}\right)^{(-1)^{j-1}i\nu_j}z^{(-1)^{j-1}i\nu_j}\notag\\&\times e^{(-1)^{j-1}\chi_j(\beta_jz+S_j)}.
\end{align}
Further from  (\ref{eq:64}) and (\ref{eq:65}),   we  get
\begin{equation}\label{eq:83s}
 N_j(\delta_je^{-\varphi})(z)=\delta_j^0\delta_j^1(z),
\end{equation}
where
\begin{align}
 \delta_j^0=&S_j^n\left(\frac{\beta_j}{S_j-T_j}\right)^{(-1)^{j-1}i\nu_j}e^{(-1)^{j-1}\chi_j(S_j)-\frac{t}{2}(S_j^2-S_j^{-2})}\hat\delta_j(S_j)\label{eq:96},\\
 \delta_j^1(z)=&\left(\frac{S_j-T_j}{\beta_jz+S_j-T_j}\right)^{(-1)^{j-1}i\nu_j}z^{(-1)^{j-1}i\nu_j}\frac{N_j\hat\delta_j(z)}{\hat\delta_j(S_j)}\notag\\
 &\times e^{(-1)^j\frac{i}{4}z^2-N_j\varphi_j(z)+(-1)^{j-1}(\chi_j(\beta_jz+S_j)-\chi_j(S_j))}.\label{eq:85}
\end{align}

 Note that $\Sigma(S_j)$ is extension of $\Sigma_j$. Let   $\hat w_\pm^j$ be the zero extension of $w_\pm^j\big|_{\Sigma_j}$ on $\Sigma(S_j)$;
then, the related operator on $L^2(\Sigma(S_j))$ is denoted as  $\hat A_j=C_{\hat w^j}$  with kernel
$$\hat w^j=\hat w_+^j+\hat w_-^j,\ \  j=1,2,3,4.$$

Define a $2\times2$ matrix $\Delta_j^0=(\delta_j^0)^{\sigma_3}$  and the related operator $\tilde\Delta_j^0$:   $\tilde\Delta_j^0\phi=\phi\Delta_j^0$ for $2\times2$ matrix $\phi$;
then $\tilde\Delta_j^0$ and its inverse are bounded.  Letting
\[
\tilde w_\pm^j=(\Delta_j^0)^{-1}(N_j\hat w_\pm^j)\Delta_j^0, \ \ \ \tilde w^j=\tilde w_+^j+\tilde w_-^j,
\]
and $\alpha_j=C_{\tilde w^j}: L^2(\Sigma_j(0))\to L^2(\Sigma_j(0))$,
direct  calculation  shows  that
\begin{align}
\alpha_j&=\tilde \Delta_j^0N_j\hat A_jN_j^{-1}(\tilde\Delta_j^0)^{-1},\label{eq:70}\\
\hat A_j&=N_j^{-1}(\tilde\Delta_j^0)^{-1}\alpha_j\tilde\Delta_j^0N_j.\label{eq:71}
\end{align}

Noticing that the support of $N_j\hat w^j$  belongs to  $M_j^{-1}\Sigma_j$,   by using  (\ref{eq:83}),  we obtain
\begin{equation}\label{eq:89}
(\Delta_j^0)^{-1}(N_j\hat w^j)\Delta_j^0=(\Delta_j^0)^{-1}(N_j\hat w_+^j)\Delta_j^0=\left(
\begin{matrix}
0 & R(\beta_jz+S_j)\delta_j^1(z)^2 \\
0 & 0
\end{matrix}\right).
\end{equation}
Similarly,   on $M_j^{-1}(\Sigma_j\cap \bar L)\setminus \{0\}$,  we  have
\begin{equation}\label{eq:90}
(\Delta_j^0)^{-1}(N_j\hat w^j)\Delta_j^0=(\Delta_j^0)^{-1}(N_j\hat w_-^j)\Delta_j^0=\left(
\begin{matrix}
0 & 0 \\
- \bar R(\beta_jz+S_j)\delta_j^1(z)^{-2} & 0
\end{matrix}\right).
\end{equation}

\section{Convergence}\label{S:8}\indent

Noticing  that  the principal part  of $q_n$ consists of four integrals respectively on four separate crosses: $\Sigma_j$ ($j=1,2,3,4$),  we  consider  the convergence of the principal part  in this section.
That is,  we estimate  four  terms
 $$\int_{\Sigma_j}z^{-2}[(1-A_j)^{-1}I](z)w^j(z)dz, \ j=1,2,3,4.\notag\\  $$
By (\ref{eq:71}), we have the following equalities
\begin{align}
 &\int_{\Sigma_j}z^{-2}[(1-A_j)^{-1}I](z)w^j(z)dz\notag\\
 =&\beta_j\int_{\Sigma_j(0)}(\beta_jz+S_j)^{-2}[(1-\alpha_j)^{-1}(\Delta_j^0)](z)(\Delta_j^0)^{-1}N_j\hat w^j(z)dz\notag\\
 =&\beta_j(\delta_j^0)^2\int_{\Sigma_j(0)}[(1-\alpha_j)^{-1}I](z)(\Delta_j^0)^{-1}N_j(\cdot\times\hat w^j)(z)\Delta_j^0dz.\label{eq:103}
\end{align}
We  estimate  the limit  of   formula (\ref{eq:103}).  For this purpose,
 we   investigate the convergence of  the following functions
\begin{align*}
&(\beta_jz+S_j)^{-2}N_jR(z)\delta_j^1(z)^2, & N_jR(z)\delta_j^1(z)^2,\\
&(\beta_jz+S_j)^{-2}N_j\bar R(z)\delta_j^1(z)^{-2}, & N_j\bar R(z)\delta_j^1(z)^{-2}.
\end{align*}
\begin{proposition}\label{Pro:1}
 For any arbitrary fixed constant $\gamma$ ($0< 2\gamma<1$), on $M_j^{-1}(\Sigma_j\cap L)\cap\{z:\pm z/e^{i\pi/4}> 0\}$ respectively, we have
  \begin{eqnarray}
   \left|(\beta_jz+S_j)^{-2}N_jR(z)\delta_j^1(z)^2-S_j^{-2}R(S_j\pm)e^{-iz^2/2}z^{2i\nu_j}\right| \notag\\
   \leq Ce^{-\frac{i}{2}\gamma z^2}t^{-\frac{1}{2}}\log t, \label{eq:104}\\
   \left|N_jR(z)\delta_j^1(z)^2-R(S_j\pm)e^{-iz^2/2}z^{2i\nu_j}\right| \notag\\
   \leq Ce^{-\frac{i}{2}\gamma z^2}t^{-\frac{1}{2}}\log t,\label{eq:105}
  \end{eqnarray}
where  $R(S_j+)=\bar r(S_j)$ on $M_j^{-1}(\Sigma_j\cap L)\cap\{z:  z/e^{i\pi/4}> 0\}$ and $R(S_j-)=-\frac{\bar r(S_j)}{1-\left|r(S_j)\right|^2}$ on $M_j^{-1}(\Sigma_j\cap L)\cap\{z:  z/e^{i\pi/4}< 0\}$.
\end{proposition}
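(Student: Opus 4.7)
The plan is to start from the explicit form (\ref{eq:85}) of $\delta_j^1(z)$, identify its natural limit as $t\to\infty$ (equivalently $\beta_j\to 0$ at rate $t^{-1/2}$), and then bound each deviation of the actual expression from its limit. Writing
\[
\delta_j^{1,\infty}(z):=z^{(-1)^{j-1}i\nu_j}\,e^{(-1)^j iz^2/4},
\]
the claimed right-hand sides in (\ref{eq:104})--(\ref{eq:105}) are exactly $S_j^{-2}R(S_j\pm)(\delta_j^{1,\infty})^2$ and $R(S_j\pm)(\delta_j^{1,\infty})^2$ (up to the sign conventions in $\nu_j$ and the Gaussian), so the entire task is to estimate the ratio $\delta_j^1/\delta_j^{1,\infty}$, to expand $R(\beta_j z+S_j)$ around $S_j\pm$, and to expand the prefactor $(\beta_j z+S_j)^{-2}$ around $S_j^{-2}$.

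The ratio $\delta_j^1(z)/\delta_j^{1,\infty}(z)$ factors into four pieces: (i) the rational factor $\bigl(\tfrac{S_j-T_j}{\beta_j z+S_j-T_j}\bigr)^{(-1)^{j-1}i\nu_j}$; (ii) the analytic ratio $N_j\hat\delta_j(z)/\hat\delta_j(S_j)$; (iii) the cubic phase $e^{-N_j\varphi_j(z)}$; and (iv) the $\chi_j$-difference $e^{(-1)^{j-1}(\chi_j(\beta_j z+S_j)-\chi_j(S_j))}$. For (i)--(iii) one uses only elementary expansions: analyticity of $\hat\delta_j$ at $S_j$ and boundedness of $\nu_j$ give (i),(ii) $=1+O(\beta_j z)=1+O(t^{-1/2}|z|)$ on compact $z$-sets, while $\varphi_j(\zeta)=O(|\zeta-S_j|^3)$ yields (iii) $=1+O(t^{-3/2}|z|^3)$. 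The delicate piece is (iv): since the integrand defining $\chi_j$ involves $\log\bigl[(1-|r(\tau)|^2)/(1-|r(S_j)|^2)\bigr]$ which vanishes at $\tau=S_j$, a standard Cauchy-integral estimate combined with H\"older regularity of $r$ near the stationary phase point produces
\[
\bigl|\chi_j(\beta_j z+S_j)-\chi_j(S_j)\bigr|\le C\,t^{-1/2}\log t\cdot(1+|z|),
\]
which is the origin of the $\log t$ factor in (\ref{eq:104})--(\ref{eq:105}). This is by far the main obstacle in the proof; the other three factors require only Taylor expansion.

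Finally, since $R$ extends analytically to each side of the relevant arc of $\Sigma$, one has $R(\beta_j z+S_j)=R(S_j\pm)+O(\beta_j z)$ with sign determined by the ray $z/e^{i\pi/4}\gtrless 0$, and trivially $(\beta_j z+S_j)^{-2}=S_j^{-2}+O(t^{-1/2}|z|)$. Collecting all the corrections, the dominant error is $O(t^{-1/2}\log t)$ multiplied by a polynomial in $|z|$. The polynomial growth is harmless once combined with the Gaussian $e^{(-1)^j iz^2/2}$: splitting $e^{(-1)^j iz^2/2}=e^{-\frac{i}{2}\gamma z^2}\cdot e^{(-1)^j iz^2/2+\frac{i}{2}\gamma z^2}$ and noting that on $\Sigma_j(0)$ we have $|e^{\pm iz^2/2}|=e^{\pm|z|^2/2}$ on each ray, the second exponential decays with rate $(1-2\gamma)/2>0$ along the relevant ray and majorises any polynomial in $|z|$. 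Substituting this splitting and absorbing all polynomial growth into this superfluous Gaussian decay yields the stated bounds (\ref{eq:104}) and (\ref{eq:105}).
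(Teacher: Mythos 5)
The paper itself omits the proof of this proposition, deferring to Proposition 10.1 of Yamane's paper (which in turn follows Deift--Zhou's Lemma 3.35), and your sketch reconstructs essentially that standard argument: identify the $t\to\infty$ limit of $\delta_j^1$, bound the four deviation factors of the ratio (with the $\chi_j$-difference supplying the $\log t$ via the Cauchy-integral estimate $|\chi_j(\beta_jz+S_j)-\chi_j(S_j)|\le Ct^{-1/2}(1+|z|)\log t$), Taylor-expand $R(\beta_jz+S_j)$ and $(\beta_jz+S_j)^{-2}$, and absorb the polynomial growth in $|z|$ into the surplus Gaussian decay. Two harmless quantitative slips: since $\varphi$ carries an overall factor $t$, one has $\varphi_j(\zeta)=O(t|\zeta-S_j|^3)$ (the paper's own display drops the $t$), so your factor (iii) is $1+O(t^{-1/2}|z|^3)$ rather than $1+O(t^{-3/2}|z|^3)$; and the leftover Gaussian after splitting off $e^{-i\gamma z^2/2}$ decays at rate $(1-\gamma)/2$ rather than $(1-2\gamma)/2$ --- neither affects the stated $t^{-1/2}\log t$ bound.
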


This  proposition  can be proved in a similar way  to  proposition 10.1 in \cite{yamane2014long},
we omit it here.  Similarly, we also have the following proposition.
\begin{proposition}\label{Pro:2}
  For any arbitrary fixed constant $\gamma$ ($0< 2\gamma<1$), on $M_j^{-1}(\Sigma_j\cap\bar L)\cap\{z:\pm ze^{-i\pi/4}> 0\}$ respectively, we have
  \begin{eqnarray}
   \left|(\beta_jz+S_j)^{-2}N_j\bar R(z)\delta_j^1(z)^{-2}-S_j^{-2}\bar R(S_j\pm)e^{iz^2/2}z^{-2i\nu_j}\right| \notag\\
   \leq Ce^{\frac{i}{2}\gamma z^2}t^{-\frac{1}{2}}\log t, \label{eq:106}\\
   \left|N_j\bar R(z)\delta_j^1(z)^{-2}-\bar R(S_j\pm)e^{iz^2/2}z^{-2i\nu_j}\right|
   \leq Ce^{\frac{i}{2}\gamma z^2}t^{-\frac{1}{2}}\log t, \ j=1, 3,\label{eq:107}
  \end{eqnarray}
where $\bar R(S_j+)=r(S_j)$ on $M_j^{-1}(\Sigma_j\cap\bar L)\cap\{z: ze^{i\pi/4}> 0\}$ and $\bar R(S_j-)=-\frac{r(S_j)}{1-\left|r(S_j)\right|^2}$ on $M_j^{-1}(\Sigma_j\cap\bar L)\cap\{z: ze^{i\pi/4}< 0\}$.
\end{proposition}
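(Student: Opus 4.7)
The plan is to mirror the argument of Proposition \ref{Pro:1}, now applied to the conjugate quantity $\bar R$ on the contour $\bar L$. The central task is to expand every factor of $N_j\bar R(z)\delta_j^1(z)^{-2}$ around its limiting form $\bar R(S_j\pm)e^{iz^2/2}z^{-2i\nu_j}$, uniformly on the scaled contour $M_j^{-1}(\Sigma_j\cap\bar L)$, and to show that each remainder is dominated by $Ce^{\frac{i}{2}\gamma z^2}t^{-1/2}\log t$ for any fixed $0<2\gamma<1$.

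Concretely, I would first use the explicit formula (\ref{eq:85}) for $\delta_j^1$ to split $\delta_j^1(z)^{-2}$ into four factors: a rational power of $(\beta_jz+S_j-T_j)/(S_j-T_j)$, the pure power $z^{-2(-1)^{j-1}i\nu_j}$, the ratio $(N_j\hat\delta_j(z)/\hat\delta_j(S_j))^{-2}$, and the exponential whose exponent involves $-N_j\varphi_j(z)$ and $\chi_j(\beta_jz+S_j)-\chi_j(S_j)$. Using $|\beta_j|=O(t^{-1/2})$ from (\ref{pop5}), each algebraic and holomorphic factor is estimated by elementary expansions: $|(1+w)^{ia}-1|\le C|w|$ for $|w|$ small, analyticity of $\hat\delta_j$ near $S_j$, and the cubic bound $|N_j\varphi_j(z)|=O(t^{-3/2}|z|^3)$ coming from (\ref{eq:80}). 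A Taylor expansion gives $\bar R(\beta_jz+S_j)=\bar R(S_j\pm)+O(t^{-1/2}|z|)$, and for (\ref{eq:106}) the prefactor $(\beta_jz+S_j)^{-2}=S_j^{-2}+O(t^{-1/2}|z|)$. In all these estimates the explicit factor $e^{\frac{i}{2}\gamma z^2}$ absorbs the polynomial growth in $|z|$ against the oscillatory $e^{iz^2/2}$, leaving a constant $C$ independent of $z$ and $t$.

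The main obstacle, and the source of the $\log t$ factor, is the estimation of $\chi_j(\beta_jz+S_j)-\chi_j(S_j)$. Because $\log(1-|r(\tau)|^2)-\log(1-|r(S_j)|^2)$ vanishes H\"older-continuously at $\tau=S_j$, the difference may be written as a Cauchy-type integral over $\wideparen{T_jS_j}$ whose kernel has size $O\bigl(|\tau-S_j|^\alpha |\beta_jz|/|\tau-S_j|^2\bigr)$ near $S_j$. Splitting this integral into a piece within distance $|\beta_jz|$ of $S_j$ and its complement produces the $\log t$ factor, together with the $t^{-1/2}$ coming from $|\beta_j|$. This step is the exact conjugate of the corresponding estimate in Proposition 10.1 of \cite{yamane2014long}; once it is in hand, the bounds (\ref{eq:106})--(\ref{eq:107}) follow by multiplying the above expansions and comparing real and imaginary parts of the exponents.
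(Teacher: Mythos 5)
Your proposal is correct and follows essentially the same route as the paper, which in fact omits the proof entirely and simply defers (via Proposition \ref{Pro:1}) to the argument of Proposition 10.1 in \cite{yamane2014long}: factor $\delta_j^1(z)^{-2}$ via (\ref{eq:85}), Taylor-expand $\bar R$ and the algebraic factors using $|\beta_j|=O(t^{-1/2})$, extract the $\log t$ from the log-Lipschitz modulus of continuity of $\chi_j$ at $S_j$, and absorb all polynomial and cubic-exponent growth into the loss from $e^{iz^2/2}$ to $e^{\frac{i}{2}\gamma z^2}$. The only slip is quantitative and harmless: since $\varphi'''=O(t)$, one has $N_j\varphi_j(z)=O(t|\beta_j|^3|z|^3)=O(t^{-1/2}|z|^3)$ rather than $O(t^{-3/2}|z|^3)$, which is precisely why the contour must be short enough ($\epsilon$ small) for the Gaussian factor to absorb this term.
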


\begin{remark}\label{Rem:1}
 For  even number $j$,  in the same way,  we can  get the following result
 \begin{align*}
 (\beta_jz+S_j)^{-2}N_jR(z)\delta_j^1(z)^2&=S_j^{-2}R(S_j\pm)e^{iz^2/2}z^{-2i\nu_j}\\+O(e^{-\frac{i}{2}\gamma z^2}t^{-\frac{1}{2}}\log t), &\ \ \ \text{ on }M_j^{-1}(\Sigma_j\cap L)\cap\{z:\pm ze^{i\pi/4}> 0\},\\
 N_jR(z)\delta_j^1(z)^2&=R(S_j\pm)e^{iz^2/2}z^{-2i\nu_j}\\+O(e^{-\frac{i}{2}\gamma z^2}t^{-\frac{1}{2}}\log t), &\ \ \ \text{  on }M_j^{-1}(\Sigma_j\cap L)\cap\{z:\pm ze^{i\pi/4}> 0\},\\
 \beta_jz+S_j)^{-2}N_j\bar R(z)\delta_j^1(z)^{-2}&=S_j^{-2}\bar R(S_j\pm)e^{iz^2/2}z^{-2i\nu_j}\\+O(e^{\frac{i}{2}\gamma z^2}t^{-\frac{1}{2}}\log t), &\ \ \ \text{ on }M_j^{-1}(\Sigma_j\cap L)\cap\{z:\pm z/e^{i\pi/4}> 0\},\\
 N_j\bar R(z)\delta_j^1(z)^{-2} &=\bar R(S_j\pm)e^{iz^2/2}z^{-2i\nu_j}\\+O(e^{\frac{i}{2}\gamma z^2}t^{-\frac{1}{2}}\log t), &\ \ \ \text{ on }M_j^{-1}(\Sigma_j\cap L)\cap\{z:\pm z/e^{i\pi/4}> 0\}.
 \end{align*}
\end{remark}

We consider some new matrices and operators that are limits of those on $\Sigma_j(0)$ which is  a union of four parts $$\Sigma_j(0)=\bigcup_{k=1}^k\Sigma_j^k(0),$$where $$\Sigma_j^k(0)=e^{i(2k-1)\pi/4}\mathbb{R}^+.$$

For $j=1, 2, 3, 4$, we define
\[
 w_\pm^{j,\infty}=\lim_{t\to\infty}\tilde w_\pm^j, \ \ \ w^{j,\infty}=w_+^{j,\infty}+w_-^{j,\infty}.
\]
Defining  operators
\begin{equation}
 \alpha_j^\infty=C_{w^{j,\infty}}: L^2(\Sigma_j(0))\to L^2(\Sigma_j(0)),
\end{equation}
then by (\ref{eq:105}), (\ref{eq:107}) and {Remark \ref{Rem:1}}, we have
$$\parallel \tilde w_\pm^j-w_\pm^{j,\infty}\parallel_{L^2}\leq Ct^{-1}\log t,$$
$$\parallel(C_{\tilde w^j}-C_{w^{j,\infty}})I\parallel_{L^2}\leq Ct^{-1}\log t.$$
Direct calculation shows that
\begin{equation}\label{eq:109}
\parallel (1-\alpha_j^\infty)^{-1}I-(1-\alpha_j)^{-1}I\parallel_{L^2}\leq Ct^{-1}\log t.
\end{equation}

In summary,   by using  (\ref{eq:56}), (\ref{eq:103}), (\ref{eq:104}), (\ref{eq:106}), (\ref{eq:109}),  Remark \ref{Rem:1}, we get that
\begin{align}\label{eq:77}
q_n=&\frac{\delta(0)^{-1}}{2\pi i}\sum_{j=1}^4\beta_jS_j^{-2}(\delta_j^0)^2\Big\{\int_{\Sigma_j(0)}[(1-\alpha_j^\infty)^{-1}I](z)w^{j,\infty}(z)dz\Big\}_{12}\notag\\
&+O(t^{-1}\log t).
\end{align}

For the branch cut of $z^{i\nu_j}$, we first consider that of $\delta_j^1$. From (\ref{eq:85}), it is reasonable to consider $M_j^{-1}\wideparen{S_jT_j}\cup( \beta_j^{-1}(T_j-S_j)+\mathbb{R}^-)$ as the branch cut of $\delta_j^1$. As $t\to\infty$, we find that this branch cut becomes $\mathbb{R}^-=\{x\in\mathbb{R}\}$. Therefore, we consider $\mathbb{R}^-$ as the branch cut of $z^{i\nu_j}$ in the remaining of this article.

\section{The boundedness of operators}\label{S:9}
\indent

In this section, we would investigate the existence and boundedness of  the following  operators
\begin{align}
&(I-C_{w^{(2)}})^{-1}, \  \  (I-C_{w'})^{-1}, \ (1-A_j)^{-1},\ \  (1-\hat A_j)^{-1}, \nonumber\\
& (1-\alpha_j)^{-1}, \ \  (1-\alpha_j^\infty)^{-1} \ \ ( j=1, 2, 3, 4).\nonumber
\end{align}

 \begin{remark}\label{re:9.1}
 	Assume that  operator $(1-\alpha_j^\infty)^{-1}$
 	exists and is  bounded on $L^2(\Sigma_j(0))$ for all $j$. With Proposition \ref{Pro:1}, \ref{Pro:2} and Remark \ref{Rem:1}, we deduce that
 	$ (1-\alpha_j)^{-1} $
 	exists and is uniformly bounded as $t\to\infty$.
 	Because $M_j$ is homeomorphic  from $\Sigma_j(0)$ to $\Sigma(S_j)$, $N_j$ is invertible and $\|N_j\|_{L^2(\Sigma(S_j)\to L^2(\Sigma_j(0))}$   is bounded as well as $\|N_j^{-1}\|_{L^2(\Sigma(S_j)\to L^2(\Sigma_j(0))}$. Moreover, noticing that $\Delta_j^0$ and its inverse are invertible, we get the existence and boundedness of $(1-\hat A_j)^{-1}$  from those of $ (1-\alpha_j)^{-1} $. More precisely, by equation (\ref{eq:70}), we have $$(1-\hat A_j)^{-1}=N_j^{-1}(\tilde\Delta_j^0)^{-1}(1-\alpha_j)^{-1}\tilde\Delta_j^0N_j.$$
 	
 	By (2.58) in \cite{deift1993steepest}, we derive the existence and boundedness of $(1-A_j)^{-1}$ from those of
	  $(1-\hat A_j)^{-1}$; thus, $(1-C_{w'}^{\Sigma'})^{-1}$ exists and is uniformly bounded  with equation (\ref{eq:51s}).
	  Moreover,   by (2.59) in \cite{deift1993steepest},  $(I-C_{w'})^{-1}$  exists and is uniformly bounded.
 	
 	Because of (\ref{eq:40})-(\ref{eq:41}) and boundedness of  $\Sigma^{(2)}$, we get that $$\|C_{w^{(2)}}-C_{w'}\|_{L^2(\Sigma^{(2)})\to L^2(\Sigma^{(2)})}\to 0.$$  Thus, by the second resolvent identity, we obtain that for sufficiently large t, the existence and boundedness of $(I-C_{w'})^{-1}$ implies those of $(I-C_{w^{(2)}})^{-1}$.
 \end{remark}

According to {Remark \ref{re:9.1}},   the remaining work in this section is to prove the existence and boundedness of $(1-\alpha_j^\infty)^{-1}$.  We discuss  this problem in  two cases when  $j$ is odd and $j$ is   even.

If  j is odd, we change the orientation of $\Sigma_j^1(0)$ and $\Sigma_j^4(0)$ in $\Sigma_j(0)$. In fact, this change doesn't affect $\alpha_j^\infty$ as an operator on $L^2(\Sigma_j(0))$. So  we could add the real line with orientation marked in {Figure 7},    we   define matrices
\begin{align*}
	w_\pm^{j,e}&=\begin{cases}
	w_\pm^{j,\infty} &\text{ on }\Sigma_j^2(0)\cup\Sigma_j^3(0),\\
	-w_\mp^{j,\infty} &\text{ on }\Sigma_j^1(0)\cup\Sigma_j^4(0),\\
	0 & \text{ on }\mathbb{R},
	\end{cases}\\
	w^{j,e}&=w_+^{j,e}+w_-^{j,e}, \ \  \alpha_j^e=C_{w^{j,e}},
\end{align*}
where
\begin{align*}
w_+^{j,e}=&
\begin{cases}
\left(
\begin{matrix}
0 & 0 \\
r(S_j)e^{iz^2/2}z^{-2i\nu_j} & 0
\end{matrix}\right) & z\in e^{i\pi/4}\mathbb{R}_+, \\
\left(
\begin{matrix}
0 & -\frac{\bar r(S_j)}{1-|r(S_j)|^2}e^{-iz^2/2}z^{2i\nu_j}\\
0&0
\end{matrix}\right) & z\in e^{3i\pi/4}\mathbb{R}_+, \\
0 & \text{otherwise  on $\Sigma^e$},
\end{cases} \\
w_-^{j,e}=&
\begin{cases}
\left(
\begin{matrix}
0 & -\bar r(S_j)e^{-iz^2/2}z^{2i\nu_j} \\
0 & 0
\end{matrix}\right) & z\in e^{-i\pi/4}\mathbb{R}_+, \\
\left(
\begin{matrix}
0 & 0 \\
\frac{r(S_j)}{1-|r(S_j)|^2}e^{iz^2/2}z^{-2i\nu_j} & 0
\end{matrix}\right) & z\in e^{-3i\pi/4}\mathbb{R}_+, \\
0 & \text{otherwise  on $\Sigma^e$}.
\end{cases}
\end{align*}
Then, by (2.58) in \cite{deift1993steepest}, we only need  estimate the bound of $(1-\alpha_j^e)^{-1}$.

 \ \ \ \ \ \ \ \ \ \ \ \ \ \ \ \ \ \
 \begin{tikzpicture}
 \pgfplotsset{
   every axis/.append style={
   extra description/.code={
   	\node at (0.5,0.1) { {Figure 7} $\Sigma_j^e$ for $j$ is odd};
   \node at (0.7,0.57){$\Omega_1^e$};
   \node at (0.5,0.68){$\Omega_2^e$};
   \node at (0.3,0.57){$\Omega_3^e$};
   \node at (0.3,0.43){$\Omega_4^e$};
   \node at (0.7,0.43){$\Omega_6^e$};
   \node at (0.5,0.32){$\Omega_5^e$};
   \node at (0.5,0.57){0};
   \filldraw [black] (0.5,0.5) circle (1.3pt);
   }
   }
   }
 \begin{axis}[
 hide x axis,
 hide y axis,
ymin=-4,
ymax=4,
xmin=-4.5,
xmax=4.5,
 ]
 \addplot[domain=-2:2]
 {x};
 \addplot[domain=-2:-1,->]
 {x};
 \addplot[domain=1:2,>-]
 {x};
 \addplot[domain=-2:2]
 {-x};
 \addplot[domain=-2:-1,->]
 {-x};
 \addplot[domain=1:2,>-]
 {-x};
 \addplot[domain=-2*sqrt(2):2*sqrt(2)]
 {0};
 \addplot[domain=-2*sqrt(2):-sqrt(2),-<]
 {0};
 \addplot[domain=sqrt(2):2*sqrt(2),<-]
 {0};
 \end{axis}
\end{tikzpicture}

 We write
 \[
 v^{j,e}=(b_-^{j,e})^{-1}b_+^{j,e}=(1-w_-^{j,e})^{-1}(1+w_+^{j,e}),
 \]
and define a meromorphic function  $\sigma$
\begin{equation}
 \sigma=
 \begin{cases}
  z^{i\nu_j\sigma_3} & z\in\Omega_2^e\cup\Omega_5^e,\\
  b_+^{j,e}z^{i\nu_j\sigma_3} & z\in\Omega_1^e\cup\Omega_3^e,\\
  b_-^{j,e}z^{i\nu_j\sigma_3} & z\in\Omega_4^e\cup\Omega_6^e.
 \end{cases}
\end{equation}
We simply denote  $v^e=v^{j,e}$,  also  $\det\sigma=1$  implies that   $\sigma$ is invertible. Let
\begin{equation}
 v^{e,\sigma}=\sigma_-^{-1}v^e\sigma_+, \ \ \text{ on }\Sigma_j^e,
\end{equation}
then     we find that
\begin{equation}
 v^{e,\sigma}=
 \begin{cases}
  v^0:=e^{-\frac{iz^2}{4}\hat\sigma_3}\left(
  \begin{matrix}
   1 & \bar r(S_j)\\
   -r(S_j) & 1-|r(S_j)|^2
  \end{matrix}\right) &\text{on $\mathbb{R}$},\\
  I & \text{on $\Sigma^e\setminus \mathbb{R}$},
 \end{cases}
\end{equation}
where
\begin{align}
 v^{e,\sigma}=&(b_-^{e,\sigma})^{-1}b_+^{e,\sigma}=(1-w_-^{e,\sigma})^{-1}(1-w_+^{e,\sigma})\notag\\
 =&\left(
 \begin{matrix}
 1 & 0\\
 -r(S_j)e^{iz^2/2} & 1
 \end{matrix}\right)\left(
 \begin{matrix}
   1 & \bar r(S_j)e^{-iz^2/2}\\
  0 & 1
 \end{matrix}\right).
\end{align}
Let
$$w^{e,\sigma}=w_+^{e,\sigma}+w_-^{e,\sigma}. $$
Since the operator $C_{w^{e,\sigma}|_{\mathbb{R}}}$: $L^2(\mathbb{R})\to L^2(\mathbb{R})$ satisfies
 $$\parallel C_{w^{e,\sigma}|_{\mathbb{R}}}\parallel\leq  \sup_{z\in\mathbb{R}}|e^{-iz^2/2}\bar r(S_j)|\leq \parallel r\parallel_{L^\infty(\mathbb{R})}< 1,$$
   by Lemma 2.56 in \cite{deift1993steepest}, we have  $\parallel C_{w^{e,\sigma}}\parallel< 1$ , where $C_{w^{e,\sigma}}$ is an operator on $L^2(\Sigma^e)$. Thus  $(1-C_{w^{e,\sigma}})^{-1}$ exists and is  bounded.
   Like  the step 5 in section 3 of \cite{deift1993steepest},  we can  deduce  the existence and  boundedness of $(1-\alpha_j^e)^{-1}$.  So the boundedness of $(1-\alpha_j^\infty)^{-1}$ follows.

As for $j$ is even, we could go through  the process once again as $j$ is odd,  but we   simply complete it by making  conjugate transformation to $w_\pm^{j,e}$.  Notice that for $j$ is even,  $\Sigma_j^e$ admits the orientation that is different from the case of $j$ is odd. Exactly, each line of $\Sigma_j^e$ admits different orientation.

Let  $w^{j,e}=w_+^{j,e}+w_-^{j,e}$,  where  $w_\pm^{j,e}$ are given by
\begin{equation*}
w_\pm^{j,e}=\begin{cases}
w_\pm^{j,\infty} &\text{ on }\Sigma_j^2(0)\cup\Sigma_j^3(0),\\
-w_\mp^{j,\infty} &\text{ on }\Sigma_j^1(0)\cup\Sigma_j^4(0),\\
0  & \text{ on }\mathbb{R}.
\end{cases}
\end{equation*}
Further,
\begin{align*}
 w_+^{j,e}=&
 \begin{cases}
  \left(
   \begin{matrix}
    0 & 0 \\
    r(S_j)e^{-iz^2/2}z^{2i\nu_j} & 0
   \end{matrix}\right) & z\in e^{-i\pi/4}\mathbb{R}_+, \\
   \left(
   \begin{matrix}
    0 & -\frac{\bar r(S_j)}{1-|r(S_j)|^2}e^{iz^2/2}z^{-2i\nu_j}\\
    0&0
   \end{matrix}\right) & z\in e^{-3i\pi/4}\mathbb{R}_+, \\
   0 & \text{otherwise  on $\Sigma^e$},
 \end{cases} \\
 w_-^{j,e}=&
 \begin{cases}
  \left(
   \begin{matrix}
    0 & -\bar r(S_j)e^{iz^2/2}z^{-2i\nu_j} \\
    0 & 0
   \end{matrix}\right) & z\in e^{i\pi/4}\mathbb{R}_+, \\
   \left(
   \begin{matrix}
    0 & 0 \\
    \frac{r(S_j)}{1-|r(S_j)|^2}e^{-iz^2/2}z^{2i\nu_j} & 0
   \end{matrix}\right) & z\in e^{3i\pi/4}\mathbb{R}_+, \\
   0 & \text{otherwise  on $\Sigma^e$}.
 \end{cases}
\end{align*}
Define a conjugate operator $T$:
$$Tf(z)=\overline{f(\bar z)}.$$ We could find that $Tw_\pm^{j,e}$ is almost the same as $w_\pm^{j,e}$ on $\Sigma^e$ by  replacing  $\overline{r(S_j)}$  to  $r(S_{j-1})$.  Noticing that
 $$\parallel\bar r\parallel_{L^\infty}=\parallel r\parallel_{L^\infty}<1,$$
the proof of   even number   $ j$  follows as    the proof of odd number  $ j $.

\section{The asymptotic   of discrete mKdV equation}
\indent

In this section, we give   the asymptotic behavior of $q_n$ shown in equation (\ref{eq:114}).

By  (\ref{eq:77}), we only have to estimate
\begin{equation}
\Big\{\int_{\Sigma_j(0)}[(1-\alpha_j^\infty)^{-1}I](z) w^{j,\infty}(z)dz\Big\}_{12}.
\end{equation}

We expand the  $m^j$ in the form
\begin{align}\label{eq:83}
m^j(z)&=I+\frac{1}{2\pi i}\int_{\Sigma_j(0)}\frac{[(1-\alpha_j^\infty)^{-1}I](\tau)w^{j,\infty}(\tau)d\tau}{\tau-z}\nonumber\\
&=I-z^{-1}m_1^j(z)+\cdots,
\end{align}
where
\begin{align}
m_1^j(z)=\int_{\Sigma_j(0)}[(1-\alpha_j^\infty)^{-1}I](z)w^{j,\infty}(z)\frac{dz}{2\pi i}.\label{pop}
\end{align}
And  $m^j$ is analytic on $\mathbb{C}\setminus\Sigma_j(0)$ and satisfy
\begin{equation}
m_+^j=m_-^jv^{j,\infty}\ \  \text{on $\Sigma_j(0)$},
\end{equation}
where $$ v^{j,\infty}=(1-w_-^{j,\infty})^{-1}(1+w_+^{j,\infty}).$$

For   odd number $j$,   we set  $$\Sigma_j^\infty=\Sigma_j^e,$$  and the  orientation of $\Sigma^\infty$ is different from $\Sigma^e$ only on $\mathbb{R}$ (See the Figure 8).  Further  we  denote
 $\tilde w_\pm^{j,\infty}$  as the zero extension of $w_\pm^{j,\infty}$ on $\Sigma_j^\infty$.

 \ \ \ \ \ \ \ \ \ \ \ \ \ \ \ \ \ \
\begin{tikzpicture}
\pgfplotsset{
	every axis/.append style={
		extra description/.code={
			\node at (0.5,0.1) { {Figure 8}. \emph{$\Sigma_j^\infty$ for $j$ is odd}};
			\node at (0.7,0.57){$\Omega_1^e$};
			\node at (0.5,0.68){$\Omega_2^e$};
			\node at (0.3,0.57){$\Omega_3^e$};
			\node at (0.3,0.43){$\Omega_4^e$};
			\node at (0.7,0.43){$\Omega_6^e$};
			\node at (0.5,0.32){$\Omega_5^e$};
			\node at (0.5,0.57){0};
			\filldraw [black] (0.5,0.5) circle (1.3pt);
		}
	}
}
\begin{axis}[
hide x axis,
hide y axis,
ymin=-4,
ymax=4,
xmin=-4.5,
xmax=4.5,
]
 \addplot[domain=-2:2]
{x};
\addplot[domain=-2:-1,->]
{x};
\addplot[domain=1:2,>-]
{x};
\addplot[domain=-2:2]
{-x};
\addplot[domain=-2:-1,->]
{-x};
\addplot[domain=1:2,>-]
{-x};
\addplot[domain=-2*sqrt(2):2*sqrt(2)]
{0};
\addplot[domain=-2*sqrt(2):-sqrt(2),->]
{0};
\addplot[domain=sqrt(2):2*sqrt(2),>-]
{0};
\end{axis}
\end{tikzpicture}

Define  $\Phi(z)=m^j(z)\sigma(z)$ on $\mathbb{C}\setminus\Sigma^\infty$, then  we would find that
\begin{equation}
 \Phi_+=
 \begin{cases}\label{eq:86}
  \Phi_- & \text{on $  \ (e^{i\pi/4}\mathbb{R})\cup(e^{-i\pi/4}\mathbb{R})\setminus(0)$},\\
  \Phi_-(v^0)^{-1} & \text{on \  $\mathbb{R}$}.
 \end{cases}
\end{equation}
From  the asymptotic behavior of $m^j$ as $z\to\infty$, we have
\begin{equation}\label{eq:87}
 \Phi(z)z^{-i\nu_j\sigma_3}=I-z^{-1}m_1^j+\cdots.
\end{equation}
which implies that $\Phi$ is analytic on $\mathbb{C}\setminus\mathbb{R}$.  Letting  $\hat\Phi=\Phi z^{-\frac{iz^2}{4}\sigma_3}$ on $\mathbb{C}\setminus\mathbb{R}$,   we then  get
a model  RHP
\begin{align*}
\begin{cases}
\hat\Phi_+=\hat\Phi_-\left(
\begin{matrix}
1-|r(S_j)|^2 & -\overline{r(S_j)}\\
r(S_j) & 1
\end{matrix}\right) \text{ on }\mathbb{R},\\
\hat\Phi z^{-i\nu_j\sigma_3}e^{\frac{iz^2}{4}\sigma_3}\to I \text{ as }z\to\infty.
\end{cases}
\end{align*}
By using the result  (110) in  \cite{Deift1994conf},  we could get that
\begin{equation}
 (m_1^j)_{12}=\frac{i(2\pi)^{1/2}e^{i\pi/4}e^{-\pi\nu_j/2}}{r(S_j)\Gamma(-i\nu_j)},\  \ j=1, 3. \label{pop1}
\end{equation}

For  the case  when  $j$ is even,   we can write $m^j$   as
\begin{align}
 m^j&=I+\int_{\Sigma_j(0)}\frac{[(1-C_{w^{j,e}})^{-1}I](\tau)w^{j,e}(\tau)d\tau}{\tau-z}\notag\\&=I-z^{-1}m_1^j(z)+\cdots,
\end{align}
where
\begin{equation}
 m_1^j=\int_{\Sigma_j(0)}[(1-C_{w^{j,e}})^{-1}I](z)w^{j,e}(z)dz.
\end{equation}
Because $C_{w^{j,e}}=TC_{Tw^{j,e}}T$ and $T^2=1$, we obtain
\begin{equation}
 C_{Tw^{j,e}}=TC_{w^{j,e}}T
\end{equation}
and
\begin{align}\label{eq:92}
  &\int_{\Sigma_j(0)}[(1-C_{Tw^{j,e}})^{-1}I](z)Tw^{j,e}(z)dz\notag\\
  =&\int_{\Sigma_j(0)}[T(1-C_{w^{j,e}})^{-1}TI](z)Tw^{j,e}(z)dz\notag\\
  =&\int_{\Sigma_j(0)}T[(1-C_{w^{j,e}})^{-1}I\times w^{j,e}](z)dz\notag\\
  =&\overline{\int_{\Sigma_j(0)}[(1-C_{w^{j,e}})^{-1}I](z)w^{j,e}(z)dz}.
\end{align}
Therefore,   in a similar way to the case when $j$ is odd,   we have
\begin{align}\label{eq:93}
  &\Big\{\int_{\Sigma_j(0)}[(1-C_{Tw^{j,e}})^{-1}I](z)Tw^{j,e}(z)dz\Big\}_{12}=\frac{i(2\pi)^{1/2}e^{i\pi/4}e^{-\pi\nu_j/2}}{\overline{r(S_j)}\Gamma(-i\nu_j)}.
\end{align}
Combining (\ref{eq:92}) and (\ref{eq:93}) gives
\begin{equation}
 (m_1^j)_{12}=-\frac{i(2\pi)^{1/2}e^{-i\pi/4}e^{-\pi\nu_j/2}}{r(S_j)\Gamma(i\nu_j)}, \ \ j=2, 4, \label{pop2}
\end{equation}
where  $\nu_j=-\frac{1}{2\pi}\log(1-|r(S_j)|^2)$.

Finally, combining (\ref{eq:77}), (\ref{eq:83}), (\ref{pop}), (\ref{pop1})  and (\ref{pop2}) gives  the  asymptotic behavior of the  discrete  mKdV equation
\begin{equation}\label{eq:114}
  q_n=\delta(0)^{-1}\sum_{j=1}^4\beta_jS_j^{-2}(\delta_j^0)^2(m_1^j)_{12} +O(t^{-1}\log t), \ \  \text{for} \   |n|\leq V_0t,
\end{equation}
where $S_j, \ j=1, 2, 3, 4$ are stationary points;  $\beta_j$ and  $\delta_j^0$are given by (\ref{pop5}) and (\ref{eq:96}) respectively, and
\begin{equation}
 (m_1^j)_{12}=
 (-1)^{j-1}\frac{i(2\pi)^{1/2}e^{-i\pi/4}e^{-\pi\nu_j/2}}{r(S_j)\Gamma[(-1)^ji\nu_j]},\ \ \ j=1, 2, 3, 4.\notag
\end{equation}

The  asymptotic formula  (\ref{eq:114}) is composed  of  a  decaying term and  a  leading term   coming from  four stationary phase points $S_j, \ j=1, 2, 3, 4$.
In the leading term  $\delta^0_j$    contains three oscillatory factors: $S_j^n$, $e^{-\frac{t}{2}(S_j^2-S_j^{-2})}$ and $\beta_j^{(-1)^{j-1}i\nu_j}$.
Since $n/t$ is fixed, as $t$ tends to the infinity, $n$  also tends to the infinity.
If we set $\theta_j=\arg S_j$ and $\kappa_j$ the imaginary  part of $ S_j^2$, then we can write the three oscillatory terms in the form
\begin{align*}
&S_j^ne^{-\frac{t}{2}(S_j^2-S_j^{-2})}\beta_j^{(-1)^{j-1}i\nu_j}\notag = e^{\frac{i}{2}(2n\theta_j-2\kappa_j t+(-1)^j\nu_j\log t)}\psi_j(n/t),
\end{align*}
where $\psi_j$ is a function about $n/t$. Then, $\beta_j(\delta_j^0)^2$ behaves like const.$t^{-1/2}e^{ip_jt+iq_j\log t}$, $p_j\in \mathbb{R}$ and $q_j\in\mathbb{R}$.

In this article,   we   have got    the long time asymptotic  formula
  (\ref{eq:114}) for  the  solutions of  the  initial  value problem   for   the   discrete defocusing   mKdV equation   (\ref{eq:1})-(\ref{eq:2})
 by the Deift-Zhou steepest descent method.
  To our knowledge,  with exception to   recent work on Toda lattice   and discrete Schr\"{o}dinger equation   \cite{kruger2009long,yamane2014long,yamane2015,yamane20191,yamane20192},  there has been little  work on asymptotic behavior for didscrete integrable
  systems via the Deift-Zhou steepest descent method.
  There are almost no work  on asymptotic behavior for   discrete  integrable systems  with  nonzero boundary conditions.  \vspace{4mm}

{\bf Acknowledgements} Many thanks for  referees for helpful suggestions in improving the manuscript.
 This work   was supported by the National Science Foundation of China under Project
  No.11671095 and  No.51879045.


\begin{thebibliography}{}
	\bibitem{deift1993steepest} Deift, P.A., Zhou, X.: A steepest descent method for oscillatory Riemann--Hilbert problems. Asymptotics for the MKdV equation. Annals of Mathematics. \textbf{137}(2). 295-368 (2017)

	\bibitem{deift1993long} Deift, P.A., Its, A.R., Zhou, X.: Long-time asymptotics for integrable nonlinear wave equations. in Important developments in soliton theory. Springer. 181-204 (1993)

	\bibitem{cheng1999long}  Cheng, P., Venakides, S., Zhou, X.: Long-time asymptotics for the pure radiation solution of the Sine-Gordon equation. History and philosophy of logic. \textbf{24}(7-8). 1195-1262 (1999)

	\bibitem{vartanian2000higher} Vartanian, A.H.: Higher order asymptotics of the modified non-linear Schr{\"o}dinger equation. Communications in Partial Differential Equations. \textbf{25}(5-6), 1043-1098 (2000)

	\bibitem{grunert2009long} Grunert, K., Teschl, G.: Long-time asymptotics for the Korteweg--de Vries equation via nonlinear steepest descent. Mathematical Physics, Analysis and Geometry. \textbf{12}(3), 287-324 (2009)

	\bibitem{de2009long} De Monvel, A.B., Kostenko, A., Shepelsky, D., Teschl, G.: Long-time asymptotics for the Camassa--Holm equation. SIAM Journal on Mathematical Analysis. \textbf{41}(4), 1559-1588 (2009)

	\bibitem{kruger2009long}  Kr{\"u}ger, H., Teschl, G.: Long-time asymptotics of the Toda lattice for decaying initial data revisited. Reviews in Mathematical Physics. \textbf{21}(1), 61-109 (2009)

	\bibitem{yamane2014long} Yamane, H.: Long-time asymptotics for the defocusing integrable discrete nonlinear Schr{\"o}dinger equation. Journal of the Mathematical Society of Japan. \textbf{66}(3), 765-803 (2014)

	\bibitem{ablowitz1977nonlinear}Ablowitz, M.J.: Nonlinear evolution equations continuous and discrete. Siam Review. \textbf{19}(4), 663-684 (1977)

	\bibitem{narita1997miura}Narita, K.: Miura transformations between Sokolov-Shabat's equation and the discrete MKdV equation. Journal of the Physical Society of Japan. \textbf{66}(12), 4047-4048 (1997)

	\bibitem{zhen2008solitary}Wang, Z., Zou, L.,  Zhang, H.Q.: Solitary solution of discrete mKdV equation by homotopy analysis method. Communications in Theoretical Physics. \textbf{49}(6), 1373 (2008)

	\bibitem{zhu2007exp}Zhu, S.D.: Exp-function method for the discrete mKdV lattice. International Journal of Nonlinear Sciences and Numerical Simulation. \textbf{8}(3), 465-468 (2007)

	\bibitem{xiao2010darboux}Wen, X.Y., Gao, Y.T.: Darboux transformation and explicit solutions for discretized modified Korteweg-de Vries lattice equation. Communications in Theoretical Physics. \textbf{53}(5), 825 (2010)


	
	\bibitem{lang2012differential}Lang, S.: Differential and Riemannian manifolds. Springer Science \& Business Media. \textbf{160} (2012)

	\bibitem{ablowitz2004discrete}Ablowitz, M.J., Ablowitz, M.A., Prinari, B., Trubatch, A.D., et al: Discrete and continuous nonlinear Schr{\"o}dinger systems. Cambridge University Press.
 \textbf{302} (2004)
 
	\bibitem{Deift1994conf}Deift, P.A., Zhou, X.: Long-time behavior of the non-focusing nonlinear schr\"odinger equation-a case study. University of Tokyo, (1994).

	\bibitem{beals1984scattering}Beals, R., Coifman, R.R.: Scattering and inverse scattering for first order systems. Communications on Pure and Applied Mathematics. \textbf{37}(1), 39-90 (1984)

    \bibitem{yamane2015} Yamane, H.:  Long-time asymptotics for the defocusing integrable discrete nonlinear Schrodinger equation II,  Symmetry Integrability and Geometry-Methods and Applications,
      \textbf{11},   020(2015)

      \bibitem{yamane20191} Yamane, H.:  Riemann-Hilbert factorization of matrices invariant under inversion in a circle, Proceedings of the American Mathematical Society, \textbf{147}, 2147-2157 (2019)

   \bibitem{yamane20192}  Yamane, H.: Long-time asymptotics for the defocusing integrable discrete nonlinear Schr{\"o}dinger equation. Funkcialaj Ekvacioj-Serio Inteenacia,
  \textbf{62},    227-253   (2019)
\end{thebibliography}
\end{document}